\makeatletter \@addtoreset{equation}{section}
\def\pa {\partial}
\def\be   {\begin{equation}}   \def\ee   {\end{equation}}
\def\ba   {\begin{array}}      \def\ea   {\end{array}}
\def\bea  {\begin{eqnarray}}   \def\eea  {\end{eqnarray}}
\def\bean {\begin{eqnarray*}}  \def\eean {\end{eqnarray*}}
\newtheorem{theorem} {Theorem}
\newtheorem{lemma}{Lemma}
\newtheorem{definition} {Definition}
\newtheorem{corollary} {Corollary}
\newtheorem{remark}{Remark}
\newcommand{\bo} {\ensuremath{{\bf i_1}}}
\newcommand{\bos}{\ensuremath{{\bf i_1^{\text 2}}}}
\newcommand{\bts}{\ensuremath{{\bf i_2^{\text 2}}}}
\newcommand{\bj}{\ensuremath{{\bf j}}}
\newcommand{\bjs}{\ensuremath{{\bf j^{\text 2}}}}
\newcommand{\eo} {\ensuremath{{\bf e_1}}}
\newcommand{\et} {\ensuremath{{\bf e_2}}}
\newcommand{\bt} {\ensuremath{{\bf i_2}}}
\newcommand{\mC}{\ensuremath{\mathbb{C}}}
\newcommand{\mN}{\ensuremath{\mathbb{N}}}
\newcommand{\mR}{\ensuremath{\mathbb{R}}}
\newcommand{\mT}{\ensuremath{\mathbb{T}}}
\newcommand{\mK}{\ensuremath{\mathcal{K}}}
\newcommand{\mJ}{\ensuremath{\mathcal{J}}}
\newcommand{\BC}{\ensuremath{\mathbb{B}\mathbb{C}}}
\renewcommand{\(}{\left(}
\renewcommand{\)}{\right)}
\newcommand{\oa}{\left\{}
\newcommand{\fa}{\right\}}
\renewcommand{\[}{\left[}
\renewcommand{\]}{\right]}
\begin{document}

\markboth{C. Matteau and D. Rochon}{The Inverse Iteration Method for Julia Sets in the 3-Dimensional Space}

\title{The Inverse Iteration Method for Julia Sets in the 3-Dimensional Space}

\author{Claudia Matteau\thanks{E-mail: \texttt{Claudia.Matteau@UQTR.CA}} \and
Dominic Rochon\thanks{E-mail: \texttt{Dominic.Rochon@UQTR.CA}}}
\date{Département de mathématiques et
d'informatique \\ Université du Québec à Trois-Rivières \\
C.P. 500 Trois-Rivières, Québec \\ Canada, G9A 5H7}

\maketitle

\begin{abstract}
In this article, we introduce the adapted inverse iteration method to generate bicomplex Julia sets associated to the polynomial map $w^2 +c $. The result is based on a full characterization of bicomplex Julia sets as the boundary of a particular bicomplex cartesian set and the study of the fixed points of $w^2 + c$. The inverse iteration method is used in particular to generate and display in the usual 3-dimensional space bicomplex dendrites.
\end{abstract}

\vspace{1cm} \noindent\textbf{AMS subject classification:} 37F50, 32A30, 30G35, 00A69\\
\vspace{1cm} \noindent\textbf{Keywords: }Bicomplex dynamics, Inverse iteration method, Generalized Julia sets, 3D fractals\\


\section{Introduction}

Fractal sets created by iterative processes have been greatly studied in the past decades (see \cite{devaney, douadyHubbard, falconer} and \cite{nishimura}). After being displayed in the complex plane, they became part of the 3-dimensional space when A. Norton \cite{norton} gave straightforward algorithms using iteration with quaternions. The quaternionic Mandelbrot set defined by the quadratic polynomial of the form $q^2 + c$ was explored in \cite{gomatam} and \cite{holbrook}. However, as established in \cite{bedding}, it seems that no interesting dynamics could arise from this approach based on the local rotations of the classical sets. Another set of numbers revealed to be possibly more appropriate : Bicomplex Numbers. In \cite{rochon1}, the author used bicomplex numbers to produce and display in 3D a Mandelbrot set for the quadratic polynomial of the form $w^2 + c$. Filled-in Julia sets were also generated using a method analogous to the classical one in the complex plane \cite{rochon1, rochon2}. Since the bicomplex polynomial $P_{c}(w)=w^2+c$ is the following mapping of
$\mathbb{C}^{2}:\mbox{ }\left( z_1^2-z_2^2+c_1,\mbox{ }2z_1z_2+c_2 \right)$ where $w=z_1+z_2\bold{\bold{i_2}}:=(z_1,z_2)$ and $c=c_1+c_2\bold{\bold{i_2}}:=(c_1,c_2)$, bicomplex dynamics is a particular case of dynamics of several complex variables. More specifically, we note that this mapping is not a holomorphic automorphism of $\mathbb{C}^{2}$.

In this article, we study bicomplex Julia sets associated with the quadratic polynomial $w^2 + c$. We give a specific characterization of bicomplex Julia sets derived from a more general result in terms of the boundary of a bicomplex cartesian set. This characterization allows an easy display in the usual 3D space. The study of the inverse iterates and fixed points of $w^2 + c$ along with the characterization previously introduced lead to the first generalization of the inverse iteration method in two complex variables. This method, well known in the complex plane to generate Julia sets (see \cite{gamelin}, \cite{peitgen1} and \cite{peitgen2}), is used to generate and display in 3D a particular class of bicomplex Julia sets.


\section{Preliminaries}

\subsection{Julia Sets in the Complex Plane} \label{Julia2D}

Julia sets in the complex plane are defined according to the behavior of the forward iterates of a rational function. In this article, we restrict our study of Julia sets to a polynomial map that is easy to work with and has a dynamical system equivalent to the one of any polynomial map of degree two : $P_c(z) = z^2 + c$ where $z,c \in \mC$ and $c$ is fixed. First, we consider its iterates and fixed points. Next, we present some important and well known results about Julia sets.

The forward iterates of $P_c$ are given by $P_c ^0 (z) = z$ and $P_c ^n (z) = (P_c (z))^{\circ n} = (P_c \circ P_c ^{(n-1)})(z)$ for $ n \in \{1,2, \dots\}$. The inverse iterates are defined as $P_c ^{-1}(z) := (P_c (z))^{\circ (-1)} = \{w \in \mC \mid P_c(w) = z\}$ and $P_c ^{-m}(w) := (P_c ^m (w))^{\circ (-1)}$ for $m \in \{ 1,2, \dots\}$. The multivalued function $\sqrt{z-c}$ is associated to $P_c ^{-1}$.
The fixed points of $P_c$ are found by solving the equation $P_c (z_0) = z_0$. A fixed point $z_0$ is said to be attractive if $0 \leq |2 z_0| < 1$, repelling if $|2 z_0| > 1$ and indifferent if $|2 z_0| = 1$. For $c = \frac{1}{4}$, there is a single indifferent fixed point $z_0 = \frac{1}{2}$. Otherwise, there are two distinct fixed points and at least one of them is repelling (see \cite{peitgen}).

Let $\mK_c = \{z \in \mC \mid \{P_c ^n (z)\}_{n=0}^{\infty} \textrm{ is bounded }\}$ be the filled-in Julia set associated to $P_c$. The Julia set related to $P_c$ is denoted by $\mJ_c$ and defined as either one of the following :
\begin{enumerate}
  \item The boundary of the filled-in Julia set : $\mJ_c = \pa \mK_c$;
  \item The set of points $z \in \mC$ for which the forward iterates do not form a normal family at $z$ (see \cite{schiff} for details on normal families of functions).
\end{enumerate}

The second definition leads to the following theorem that justifies the inverse iteration method. Note that it is stated for Julia sets $\mJ_P$ defined by any monic polynomial map $P$ of degree $d \geq 2$. It is so valid for $P_c$. In \cite{moi}, the classical statement of the result has been slightly modified from the one in \cite{gamelin}.
\begin{theorem}
  Let $P$ be a monic complex polynomial of degree $d \geq 2$.
  \begin{description}
    \item (i) If $z_0 \in \mJ_P$ and $V$ is any open neighborhood of $z_0$, then for any whole number $k_1 \geq 0$ there exists $N > k_1$ such that
    $\mJ_P \subseteq \bigcup\limits_{k=k_1}^{N} P^k(V) $.
    \item (ii) For any $z_1 \in \mJ_P$, the set of inverse iterates $\oa \bigcup\limits_{k=k_1}^{\infty} P^{-k}(z_1)\fa$ is dense in $\mJ_P$ for all whole number $k_1 \geq 1$.
  \end{description}
  \label{TheoInvC}
\end{theorem}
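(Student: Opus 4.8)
The plan is to deduce both parts from Montel's normality criterion, using as input the second (normal-family) description of $\mJ_P$ recalled above, together with the complete invariance and compactness of the Julia set. Before the main argument I would record the standard structural facts I intend to use: $\mJ_P$ is compact, being the boundary of the compact filled-in Julia set; it is completely invariant, i.e. $P^{-1}(\mJ_P)=\mJ_P=P(\mJ_P)$; and the exceptional set $\mathcal{E}(P):=\{z\mid\bigcup_{k\ge 0}P^{-k}(z)\text{ is finite}\}$ contains at most two points, none of which lies in $\mJ_P$ (see \cite{gamelin}). I would also use that a non-constant polynomial is an open map, so every $P^k(V)$ is open, and that normality is unaffected by discarding finitely many members of a family, so that $\{P^k\}_{k\ge k_1}$ fails to be normal at a point precisely when $\{P^k\}_{k\ge 0}$ does.

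The heart of the proof is the following observation. Fix any $z\in\mJ_P$ and any open neighborhood $V$ of $z$; since $z\in\mJ_P$, the family $\{P^k\}_{k\ge k_1}$ is not normal on $V$. Viewing the iterates as holomorphic maps into the Riemann sphere $\widehat{\mC}$, each omits the value $\infty$, so Montel's theorem (\cite{schiff}) prevents a non-normal family from jointly omitting two further values; hence the set $A:=\widehat{\mC}\setminus\bigcup_{k\ge k_1}P^k(V)$ of jointly omitted values is finite. I would then show $A$ is backward invariant: if $b$ satisfied $b\in P^{k}(V)$ for some $k\ge k_1$, then $P(b)\in P^{k+1}(V)$ would lie outside $A$, so $P^{-1}(A)\subseteq A$. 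A finite backward-invariant set consists of points with finite backward orbit, i.e. $A\subseteq\mathcal{E}(P)$, whence $A\cap\mJ_P=\emptyset$ and therefore $\mJ_P\subseteq\bigcup_{k\ge k_1}P^k(V)$.

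Part (i) then follows by compactness: the sets $P^k(V)$ are open and cover the compact set $\mJ_P$, so a finite subcover exists, giving $\mJ_P\subseteq\bigcup_{k=k_1}^{N}P^k(V)$ for some finite $N$, which may be enlarged so that $N>k_1$. For part (ii), fix $z_1\in\mJ_P$ and let $w\in\mJ_P$ be arbitrary with open neighborhood $U$; applying the observation above at $w$ gives $\mJ_P\subseteq\bigcup_{k\ge k_1}P^k(U)$, so in particular $z_1\in P^k(U)$ for some $k\ge k_1$, that is $P^{-k}(z_1)\cap U\neq\emptyset$. Since $w$ and $U$ are arbitrary, the backward orbit $\bigcup_{k\ge k_1}P^{-k}(z_1)$ meets every neighborhood of every point of $\mJ_P$, which is exactly the asserted density.

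The step I expect to be delicate is the identification of the jointly omitted values $A$ as exceptional points, i.e. the backward-invariance argument: this is what rules out the a priori possibility that a point of $\mJ_P$ is missed by all iterates, and it is also what keeps parts (i) and (ii) from becoming circular (one must not assume the density of backward orbits in order to prove it). Everything else is a routine combination of Montel's theorem, the open mapping theorem, and the compactness of $\mJ_P$.
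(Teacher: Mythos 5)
Your proposal is correct, and it coincides with the intended proof: the paper does not actually prove Theorem \ref{TheoInvC} --- it is quoted as a classical result, slightly restated from \cite{gamelin} with details deferred to \cite{moi} --- and the argument in those sources is precisely the Montel-based one you give. Your reconstruction is complete and sound: non-normality of the tail family $\{P^k\}_{k\ge k_1}$ on $V$ plus Montel's theorem makes the jointly omitted set finite, its backward invariance places it in the exceptional set and hence off $\mJ_P$, compactness of $\mJ_P$ and openness of the maps $P^k$ yield the finite cover in (i), and (ii) follows by applying the covering statement at an arbitrary point $w\in\mJ_P$ together with complete invariance.
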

To generate and display $\mJ_c$ in the complex plane, it suffices to take $z_1 \in \mJ_c$ and compute its inverse iteratively, up to a maximum number of iterations. Since the inverse is given by the complex square root function $\sqrt{z-c}$ which is multivalued, two different approaches may be used. The first one is to compute all branches of the inverse at every iteration, leading to a great number of points generated. The second option is to randomly choose one of the branches of the inverse at each iteration and compute only this one. This last approach seems more appropriate for it is faster and requires less memory space.

From \cite{devKeen}, it is known that $\mJ_c$ is the closure of the set of repelling periodic points of $P_c$. Hence, for a starter $z_1$, one may choose a repelling fixed point of the polynomial map if $c \neq \frac{1}{4}$. If $c = \frac{1}{4}$, then $z_1 = \frac{1}{2}$ is a good starter for the algorithm since it is the only fixed point of $P_c$ and known to be in $\mJ_c$ from \cite{peitgen}. For a good approximation of $\mJ_c$, a high enough number of iterations is needed.

\begin{figure}[!h]
  \centering
  \subfloat[$c = 0,25$]{\includegraphics[width=5cm]{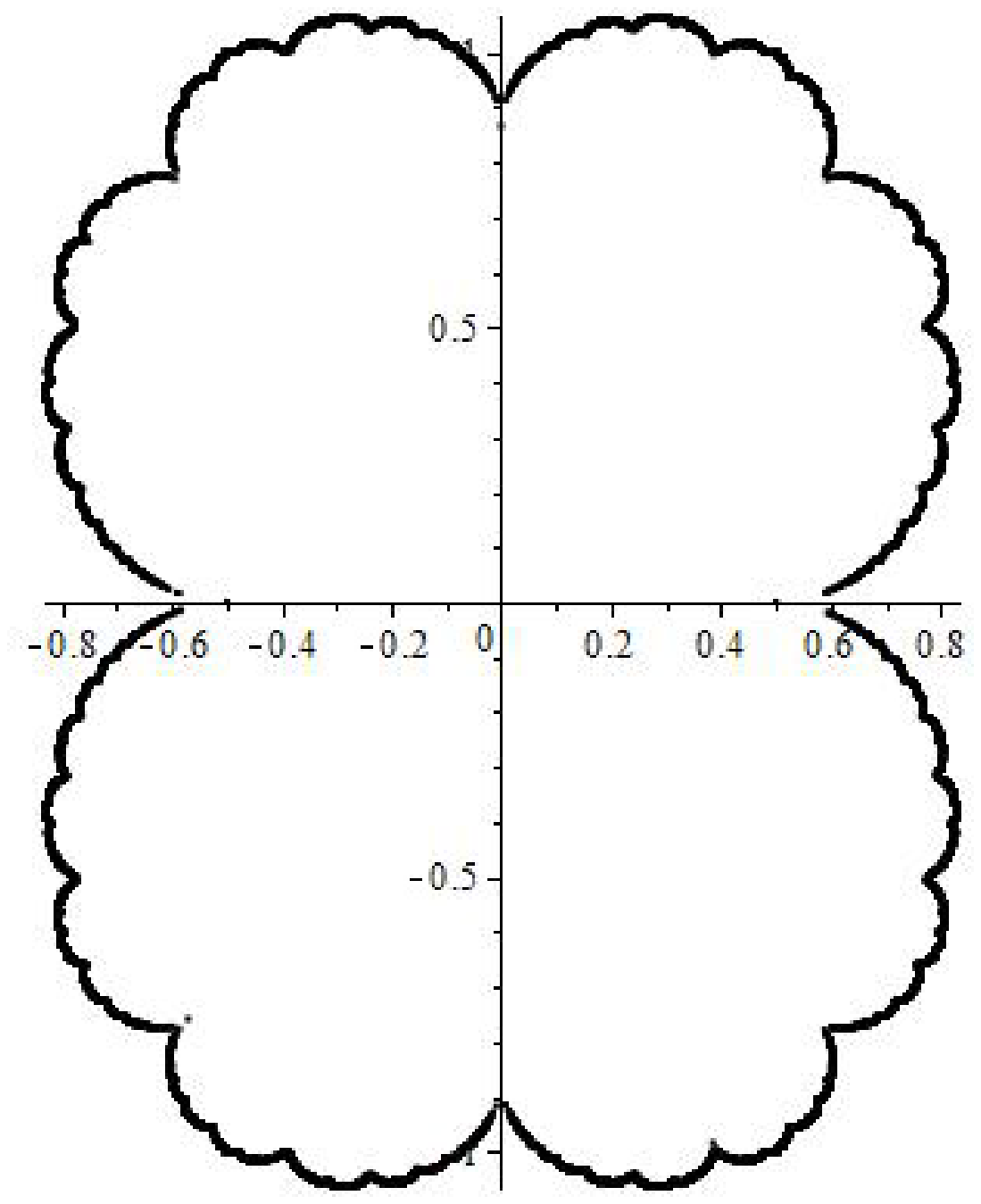}}
  \subfloat[$c = -0,123 + 0,745 \bf{i}$]{\includegraphics[width=5cm]{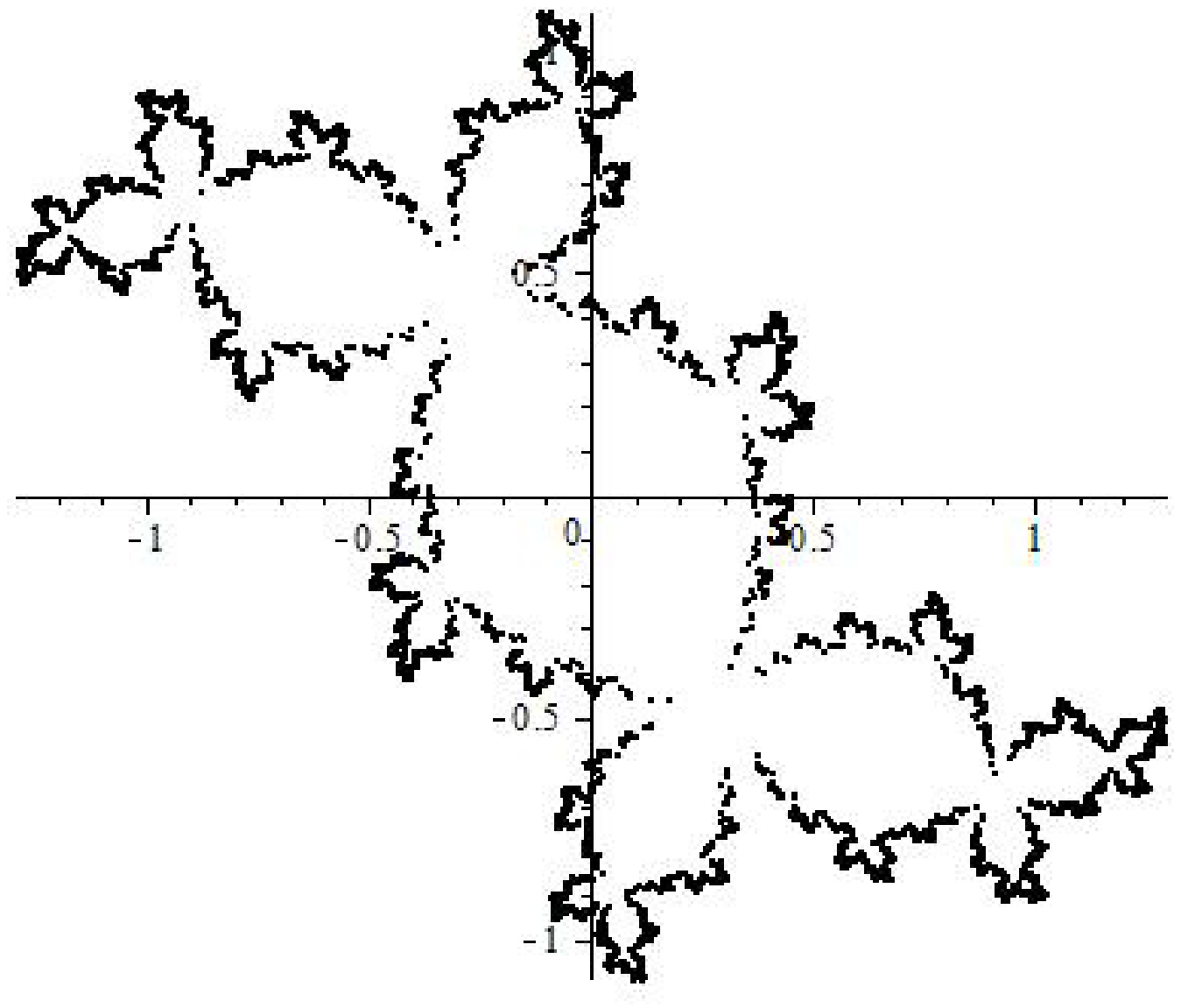}}
  \subfloat[$c = \bf{i}$]{\includegraphics[width=5cm]{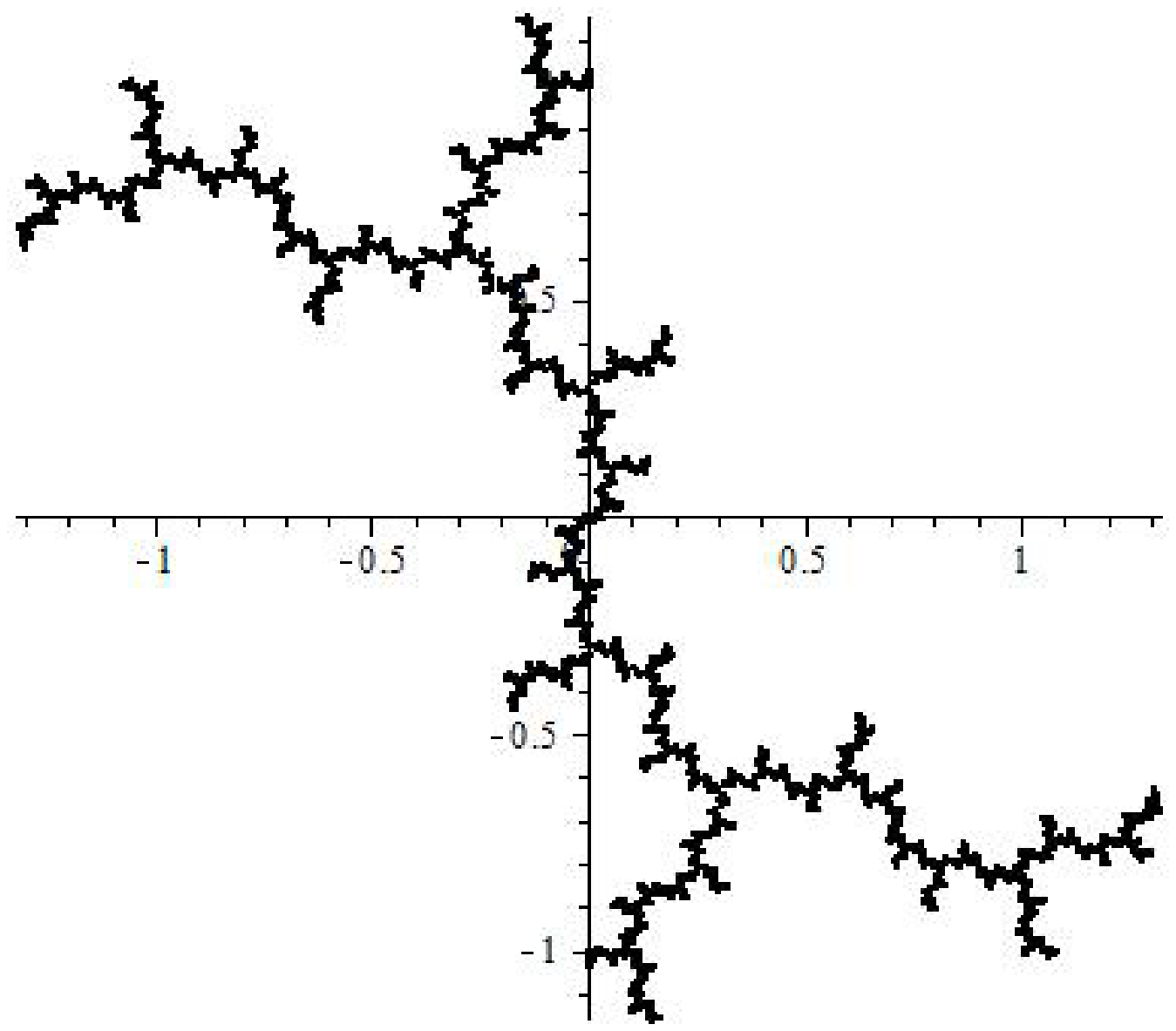}}
  \caption{Julia sets of the complex plane}
  \label{ImJulia2d}
\end{figure}

Images from Figure \ref{ImJulia2d} are those of classical Julia sets produced by the inverse iteration method. For $c = \bf{i}$, $\mK_c$ is known to be a dendrite that is compact set, pathwise connected, locally connected, nowhere dense and that does not separate the plane \cite{carlesonGamelin}. A dendrite set is equal to its boundary and so $\mK_c = \mJ_c$.

\subsection{Bicomplex numbers}

As presented in \cite{price, rochon1} and \cite{rochon2}, bicomplex numbers arise from the work of Corrado Segre \cite{segre} and are defined as follows :
$$\BC = \{ a + b \bo + c \bt + d \bj \mid a,b,c,d \in \mR \}$$
where $\bos = \bts = -1, \bjs = 1, \bo \bt = \bt \bo = \bj, \bt \bj = \bj \bt = - \bo$ and $\bo \bj = \bj \bo = - \bt$. Since we can write $a + b \bo + c \bt + d \bj$ as $ (a + b \bo) + (c + d \bo) \bt$, the set of bicomplex numbers can be seen as
$$ \BC = \{ z_1 + z_2 \bt \mid z_1, z_2 \in \mC (\bo)\}$$
where $\mC (\bo)$ is the set of complex numbers with imaginary unit $\bo$ : $\mC (\bo) = \{ x + y \bo \mid x, y \in \mR \textrm{ and } \bos = -1 \}$. Hence, $\BC$ corresponds to a kind of complexification of the usual complex numbers. From \cite{price}, we can easily see that it is a commutative unitary ring. The set of bicomplex numbers is also sometimes denoted in the literature by $\mC _2 $, $\mT$, $\mathbb{M}(2)$ or by the following complex Clifford algebras ${\rm Cl}_{\Bbb{C}}(1,0) \cong {\rm Cl}_{\Bbb{C}}(0,1)$.\\

An important property of bicomplex numbers is the unique representation using the idempotent elements $\eo = \frac{1+\bj}{2}$ and $\et = \frac{1-\bj}{2}$. In fact, $\forall w=z_1 + z_2 \bt \in \BC$, we have
\begin{eqnarray*}
z_1 + z_2 \bt &=&(z_1 - z_2 \bo) \eo + (z_1 + z_2 \bo) \et\\
              &=&\mathcal{P}_{1}(w)\eo+\mathcal{P}_{2}(w)\et
\end{eqnarray*}
where the projections
$\mathcal{P}_1,\mathcal{P}_2:\BC\longrightarrow\mathbb{C}(\bold{i_1})$ are defined as
$\mathcal{P}_1(z_1+z_2\bold{i_2}):=z_1-z_2\bold{i_1}$ and
$\mathcal{P}_2(z_1+z_2\bold{i_2}):=z_1+z_2\bold{i_1}$.

The usual operations of addition and multiplication can be done term-by-term using this representation :
\begin{description}
  \item \emph{(i)} $(z_1 + z_2 \bt) + (s_1 + s_2 \bt) = \[ (z_1 - z_2 \bo) + (s_1 - s_2 \bo)\] \eo + \[(z_1 + z_2 \bo) + (s_1 + s_2 \bo)\] \et$
  \item \emph{(ii)} $(z_1 + z_2 \bt) \cdot (s_1 + s_2 \bt) = \[(z_1 - z_2 \bo)(s_1 - s_2 \bo)\]\eo + \[(z_1 + z_2 \bo)(s_1 + s_2 \bo)\]\et$
  \item \emph{(iii)} $(z_1 + z_2 \bt) ^n = (z_1 - z_2 \bo)^n \eo + (z_1 + z_2 \bo)^n \et$ for $n = 0, 1, 2, \ldots$.
\end{description}

The real modulus of $w = z_1 + z_2 \bt \in \BC$ is given by $|| w || = \sqrt{|z_1|^2 + |z_2|^2}$ where $| \cdot |$ is the Euclidian norm in $\mC (\bo)$. Writing $z_1 = a + b \bo$ and $z_2 = c + d \bo$, we have $||w|| = \sqrt{a^2 + b^2 + c^2 + d^2}$ which is the Euclidian norm in $\mR ^4$.\\

The square root of a bicomplex number $w=z_1 + z_2 \bt$ is given in terms of the complex square roots of its idempotent components :
\begin{eqnarray*}
\sqrt{z_1 + z_2 \bt}&=& \sqrt{z_1 - z_2 \bo} \; \eo + \sqrt{z_1 + z_2 \bo} \; \et\\
                    &=& \sqrt{\mathcal{P}_{1}(w)} \; \eo + \sqrt{\mathcal{P}_{2}(w)} \; \et.
\end{eqnarray*}
Consequently, it is a multivalued function that admits up to four possible values for every $z_1 + z_2 \bt \in \BC$. The following property using the idempotent representation is proven in \cite{price} and very convenient to demonstrate our main results.
\begin{theorem}
  Let $w = z_1 + z_2 \bt \in \BC$. Then $||w|| = || z_1 + z_2 \bt|| = \sqrt{\frac{|z_1 - z_2 \bo|^2 + |z_1 + z_2 \bo|^2}{2}}$.
  \label{TheoNormBC}
\end{theorem}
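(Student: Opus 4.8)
The plan is to observe that both sides of the asserted identity are nonnegative, so it suffices to establish the equality of their squares, namely
$$|z_1|^2 + |z_2|^2 = \frac{|z_1 - z_2\bo|^2 + |z_1 + z_2\bo|^2}{2}.$$
Here $z_1, z_2 \in \mC(\bo)$ are ordinary complex numbers and $\bo$ plays the role of the imaginary unit, so the identity to be proved is nothing more than the parallelogram law in $\mC(\bo)$ applied to the two elements $z_1$ and $z_2\bo$.

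First I would recall that for any $a, b \in \mC(\bo)$ one has $|a - b|^2 + |a + b|^2 = 2|a|^2 + 2|b|^2$, which follows at once from expanding $|a \pm b|^2 = (a \pm b)\overline{(a \pm b)}$ and noting that the cross terms $\mp 2\pre(a\overline{b})$ cancel upon addition. Applying this with $a = z_1$ and $b = z_2\bo$ gives
$$|z_1 - z_2\bo|^2 + |z_1 + z_2\bo|^2 = 2|z_1|^2 + 2|z_2\bo|^2.$$

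The remaining step is to simplify $|z_2\bo|^2$. Since $\bos = -1$, the element $\bo$ has modulus one in $\mC(\bo)$, whence $|z_2\bo| = |z_2|\,|\bo| = |z_2|$. Substituting and dividing by two yields exactly $|z_1|^2 + |z_2|^2$, which by the definition $||w|| = \sqrt{|z_1|^2 + |z_2|^2}$ of the real modulus completes the argument after taking square roots. Alternatively, one could write $z_1 = a + b\bo$ and $z_2 = c + d\bo$ and verify the identity by a direct expansion; the linear forms $a \pm d$ and $b \pm c$ that appear square and reassemble into $a^2 + b^2 + c^2 + d^2$, again matching the Euclidean norm on $\mR^4$. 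I do not anticipate any genuine obstacle here: the only point requiring care is keeping in mind that the entries $z_1$ and $z_2$ are themselves complex, so that every modulus bar denotes the Euclidean norm in $\mC(\bo)$ rather than any bicomplex operation.
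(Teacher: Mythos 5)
Your proof is correct. There is, however, nothing in the paper to compare it against: the paper does not prove this theorem at all, but quotes it as a known result from Price's book (reference [Price(1991)]), so your argument serves as a self-contained replacement for that citation. The route you take is the natural elementary one: after squaring both sides, the claimed identity is exactly the parallelogram law $|a-b|^2+|a+b|^2=2|a|^2+2|b|^2$ in $\mC(\bo)$ applied to $a=z_1$, $b=z_2\bo$, combined with $|z_2\bo|=|z_2|$ since $|\bo|=1$; your fallback of writing $z_1=a+b\bo$, $z_2=c+d\bo$ and expanding directly also works, since $z_1\pm z_2\bo=(a\mp d)+(b\pm c)\bo$ and the squares reassemble into $a^2+b^2+c^2+d^2$. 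One cosmetic slip worth fixing: the cross terms in the expansion of $|a\pm b|^2$ are $\pm 2\,\pre(a\overline{b})$, not $\mp 2\,\pre(a\overline{b})$; since the only thing used is that they cancel upon addition, this has no effect on the validity of the argument.
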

Finally, we introduce some classic bicomplex sets along with a result from \cite{price} that brings them together.

\begin{definition}
  Let $X_1, X_2 \subseteq \mC(\bo)$ be nonempty. A bicomplex cartesian set $X \subseteq \BC$ determined by $X_1$ and $X_2$ is
  $$X = X_1 \times_e X_2 := \{ w \in \BC \mid w = w_1 \eo + w_2 \et \textrm{ where } (w_1, w_2) \in X_1 \times X_2 \}.$$
\end{definition}

\begin{definition}
Let $ a = a_1\eo + a_2 \et \in \BC$ and $r_1, r_2 > 0$ be fixed. We define
\begin{description}
  \item (i) The open ball with center $a$ and radius $r_1$ as
  $$B^2 (a,r_1) = \{w \in \BC \mid ||w-a|| < r_1 \}.$$
  \item (ii) The open discus with center $a$ and radii $r_1$ and $r_2$ as
  $$ D(a;r_1, r_2) = B^1 (a_1, r_1) \times_e B^1 (a_2, r_2)$$
  where $B^1(a_1, r_1)$ is the open ball with center $a_1$ and radius $r_1$ in $\mC(\bo)$.
\end{description}
\end{definition}

\begin{theorem}
  Let $a \in \BC$ and $ 0 < r_1 \leq r_2$. Then $B^2 \( a, \frac{r_1}{\sqrt{2}} \) \varsubsetneq D (a; r_1, r_2) \varsubsetneq B^2 \( a, \sqrt{\frac{r_1 ^2 + r_2 ^2}{2}} \).$
  \label{TheoInclBC}
\end{theorem}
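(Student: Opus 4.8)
The plan is to transport everything to the idempotent components, where both the ball condition and the discus condition become transparent. Writing $a = a_1\eo + a_2\et$ and taking an arbitrary $w = w_1\eo + w_2\et \in \BC$, I would first set $s := |w_1 - a_1|$ and $t := |w_2 - a_2|$, the two component distances measured with the Euclidean norm of $\mathbb{C}(\bo)$. Since the idempotent representation is linear, $w - a = (w_1 - a_1)\eo + (w_2 - a_2)\et$, so applying Theorem \ref{TheoNormBC} to $w - a$ gives at once $\|w - a\| = \sqrt{(s^2 + t^2)/2}$. This single identity is the crux: it converts the real modulus into a symmetric function of the two component distances, so all three sets can be described purely through inequalities on $s$ and $t$. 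Concretely, $w \in B^2(a, r_1/\sqrt{2})$ iff $s^2 + t^2 < r_1^2$; $w \in D(a; r_1, r_2)$ iff $s < r_1$ and $t < r_2$; and $w \in B^2(a, \sqrt{(r_1^2+r_2^2)/2})$ iff $s^2 + t^2 < r_1^2 + r_2^2$.

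With these three equivalent conditions in hand, both inclusions reduce to elementary manipulations. For the first, I would assume $s^2 + t^2 < r_1^2$; then $s^2 < r_1^2$ forces $s < r_1$, while $t^2 < r_1^2 \leq r_2^2$ forces $t < r_2$, so $w \in D(a; r_1, r_2)$ — note that this is the one place where the hypothesis $r_1 \leq r_2$ is used. For the second inclusion I would assume $s < r_1$ and $t < r_2$; squaring and adding yields $s^2 + t^2 < r_1^2 + r_2^2$, which is exactly the condition for membership in the larger ball.

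The only remaining point, requiring care rather than genuine difficulty, is the strictness of both inclusions, which I would establish by exhibiting explicit witnesses. Since $\mathcal{P}_1, \mathcal{P}_2$ surject onto $\mathbb{C}(\bo)$, the pair $(s,t)$ may be prescribed arbitrarily by choosing $w_1, w_2$ at the desired distances from $a_1, a_2$. For $B^2(a, r_1/\sqrt{2}) \subsetneq D(a; r_1, r_2)$, the choice $s = t = \tfrac{3}{4}r_1$ gives $s < r_1$ and $t < r_1 \leq r_2$, hence $w \in D$, yet $s^2 + t^2 = \tfrac{9}{8}r_1^2 > r_1^2$, so $w \notin B^2(a, r_1/\sqrt{2})$; this works uniformly whether or not $r_1 = r_2$. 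For $D(a; r_1, r_2) \subsetneq B^2(a, \sqrt{(r_1^2+r_2^2)/2})$, taking $s = r_1$ and $t = 0$ gives $s \not< r_1$, so $w \notin D$, while $s^2 + t^2 = r_1^2 < r_1^2 + r_2^2$ because $r_2 > 0$, placing $w$ in the larger ball. I anticipate the main obstacle to be purely bookkeeping: keeping the strict versus non-strict inequalities aligned across the reduction and confirming that each prescribed pair $(s,t)$ is actually realised by some bicomplex number, which the surjectivity of the projections guarantees.
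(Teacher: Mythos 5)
Your proof is correct. Note that there is nothing in the paper to compare it against: Theorem \ref{TheoInclBC} is stated as a known result imported from Price's book \cite{price}, with no proof given. Your argument is the natural, self-contained one: applying Theorem \ref{TheoNormBC} to $w-a$ reduces membership in the three sets to the conditions $s^2+t^2<r_1^2$, ($s<r_1$ and $t<r_2$), and $s^2+t^2<r_1^2+r_2^2$ on the component distances $s=|w_1-a_1|$, $t=|w_2-a_2|$, after which both inclusions are elementary (with $r_1\leq r_2$ used exactly where you say). Your strictness witnesses also check out: $s=t=\tfrac{3}{4}r_1$ lies in the discus but has $s^2+t^2=\tfrac{9}{8}r_1^2>r_1^2$, hence is outside the small ball regardless of whether $r_1=r_2$, and $s=r_1$, $t=0$ is excluded from the discus yet inside the large ball since $r_2>0$; both are realizable by taking, say, $w_1=a_1+s$ and $w_2=a_2+t$.
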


\begin{remark}
The set $\mathcal{NC}$ of zero divisors of $\BC$, called
the {\em null-cone}, is given by $\{z_1+z_2\bt\ |\
z_{1}^{2}+z_{2}^{2}=0\}$, which can be rewritten as \be
\mathcal{NC}=\{z(\bo\pm\bt)|\ z\in \mathbb{C}(\bo)\}. \ee
\end{remark}

\subsection{Bicomplex Dynamics}

We now introduce basic results in bicomplex dynamics as presented in \cite{rochon1}, \cite{rochon2} and \cite{vincent}. The definition of bicomplex Julia sets is also given.\\

Set $P_c (w) = w^2 + c $ where $w, c \in \BC$ and $c$ is fixed. The forward iterates of $P_c$ are defined as in the complex case : $P_c ^0 (w) = w$ and $P_c ^n (w) = (P_c (w))^{\circ n} = (P_c \circ P_c ^{(n-1)})(w)$ for $ n \in \{1,2, \dots\}$. Also, for $w = z_1 + z_2 \bt$ and $c = c_1 + c_2 \bt$, one can show using induction that
$$ P_c ^n (w) = P_{c_1 - c_2 \bo} ^n (z_1 - z_2 \bo) \eo + P_{c_1 + c_2 \bo} ^n (z_1 + z_2 \bo) \et \quad \textrm{ for all } n \in \mN.$$
The inverse iterates are defined as $P_c ^{-1}(w) := (P_c (w))^{\circ (-1)} = \{ w_0 \in \BC \mid P_c (w_0) = w \}$ and $P_c ^{-m}(w) := (P_c ^m (w))^{\circ (-1)}$ for $m \in \{ 1,2, \dots\}$. Again, the square root function $\sqrt{w-c}$ is associated to $P_c ^{-1}$. Since it is multivalued, the inverse iterates correspond to a set of bicomplex numbers rather than a single value. However, it may still be written as
\begin{eqnarray*}
  P_c ^{-n} (w) & = & P_{c_1 - c_2 \bo} ^{-n} (z_1 - z_2 \bo) \eo + P_{c_1 + c_2 \bo} ^{-n} (z_1 + z_2 \bo) \et \\
   & = & P_{c_1 - c_2 \bo} ^{-n} (z_1 - z_2 \bo) \times_e P_{c_1 + c_2 \bo} ^{-n} (z_1 + z_2 \bo) \quad \textrm{ for all } n \in \{1,2, \dots\}.
\end{eqnarray*}
See \cite{moi} for complete details and proof. As in section \ref{Julia2D}, bicomplex filled-in Julia sets are defined according to the behavior of the forward iterates of $P_c$.

\begin{definition}
  The bicomplex filled-in Julia set corresponding to $c \in \BC$ is defined as
  $$ \mK_{2,c} = \{ w \in \BC \mid \{ P_c ^n (w)\}_{n=0}^{\infty} \; \textrm{is bounded} \}.$$
\end{definition}
This set corresponds to a particular bicomplex cartesian set determined by the idempotent components of $c$, as proven in \cite{rochon1}.

\begin{theorem}
  Let $c = c_1 + c_2 \bt \in \BC$. Then $\mK_{2,c} = \mK_{c_1 - c_2 \bo} \times_e \mK_{c_1 + c_2 \bo}$.
\end{theorem}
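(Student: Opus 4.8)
The plan is to reduce the set identity to a pointwise boundedness statement and then combine the two tools already at hand: the term-by-term formula for the forward iterates of $P_c$ recalled above, and the modulus formula of Theorem \ref{TheoNormBC}. Fix $w = z_1 + z_2\bt \in \BC$. Its idempotent components are $\mathcal{P}_1(w) = z_1 - z_2\bo$ and $\mathcal{P}_2(w) = z_1 + z_2\bo$, so by the definition of the cartesian product $\times_e$ together with the definitions of the complex filled-in Julia sets $\mK_{c_1 - c_2\bo}$ and $\mK_{c_1 + c_2\bo}$, the claimed equality amounts to the single equivalence
$$ w \in \mK_{2,c} \quad \Longleftrightarrow \quad z_1 - z_2\bo \in \mK_{c_1 - c_2\bo} \ \text{ and } \ z_1 + z_2\bo \in \mK_{c_1 + c_2\bo}. $$
It therefore suffices to prove this last statement for an arbitrary $w$.

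First I would set $A_n := P_{c_1 - c_2\bo}^n(z_1 - z_2\bo)$ and $B_n := P_{c_1 + c_2\bo}^n(z_1 + z_2\bo)$ for the two complex orbits. The induction formula for $P_c^n$ gives $P_c^n(w) = A_n\eo + B_n\et$ for every $n$, which is exactly the idempotent decomposition of $P_c^n(w)$, so that $\mathcal{P}_1(P_c^n(w)) = A_n$ and $\mathcal{P}_2(P_c^n(w)) = B_n$. Applying Theorem \ref{TheoNormBC} to $P_c^n(w)$ then yields
$$ \| P_c^n(w) \|^2 = \frac{|A_n|^2 + |B_n|^2}{2} \qquad \text{for all } n \in \mN . $$

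The heart of the argument is the equivalence between boundedness of the bicomplex orbit $\{P_c^n(w)\}_{n=0}^{\infty}$ and simultaneous boundedness of the two complex orbits $\{A_n\}$ and $\{B_n\}$. From the displayed identity one extracts the two-sided estimate $\max(|A_n|,|B_n|) \leq \sqrt{2}\,\|P_c^n(w)\| \leq \sqrt{2}\,\max(|A_n|,|B_n|)$ valid for every $n$. The left inequality shows that if $\{P_c^n(w)\}$ is bounded then each of $\{A_n\}$ and $\{B_n\}$ is bounded, i.e. $z_1 - z_2\bo \in \mK_{c_1 - c_2\bo}$ and $z_1 + z_2\bo \in \mK_{c_1 + c_2\bo}$; the right inequality (equivalently $\|P_c^n(w)\| \leq \max(|A_n|,|B_n|)$) supplies the converse. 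This establishes the required equivalence and hence the theorem.

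I do not expect a genuine obstacle here, since both ingredients being combined are already available. The only point demanding care is the bookkeeping that identifies $A_n$ and $B_n$ as the idempotent components of $P_c^n(w)$, so that Theorem \ref{TheoNormBC} applies at each step; once that is in place, the fact that the real modulus both dominates and is dominated by the maximum of the two complex moduli makes boundedness transfer in both directions, which is precisely what the cartesian description $\times_e$ encodes.
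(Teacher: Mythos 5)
Your proof is correct. The paper itself does not prove this theorem---it defers to \cite{rochon1}---but your argument, which decomposes the iterates as $P_c^n(w) = A_n \eo + B_n \et$ via the term-by-term formula and then uses the norm identity of Theorem \ref{TheoNormBC} to show that boundedness of the bicomplex orbit is equivalent to simultaneous boundedness of the two complex orbits, is exactly the standard argument of the cited source, so it is essentially the same approach (and your two-sided estimate $\max(|A_n|,|B_n|) \leq \sqrt{2}\,\|P_c^n(w)\|$ and $\|P_c^n(w)\| \leq \max(|A_n|,|B_n|)$ is stated and used correctly).
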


\begin{definition}
  The bicomplex Julia set corresponding to $c \in \BC$ is the boundary of the associated filled-in Julia set : $\mJ_{2,c} = \pa \mK_{2,c}$.
\end{definition}
From the previous theorem, it is the boundary of a specific bicomplex cartesian set. This is investigated more deeply in this next section.


\section{Characterization of bicomplex Julia sets}
\subsection{Boundary of bicomplex cartesian sets}
We first demonstrate two lemmas derived from \cite{price}. Then, we establish a full characterization of the boundary of any bicomplex cartesian set which is valid for Julia sets.

\begin{lemma}
  Let $X_1, X_2 \subseteq \mC (\bo)$ be nonempty and $X \subseteq \BC$ such that $X = X_1 \times_e X_2$. If $z_1 + z_2 \bt \in \pa X$, then $z_1 - z_2 \bo \in \pa X_1$ or $z_1 + z_2 \bo \in \pa X_2$.
  \label{Lemma1}
\end{lemma}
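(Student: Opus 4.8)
The plan is to reduce the statement to a purely topological fact about cartesian products, transported from $\BC$ to $\mC(\bo)\times\mC(\bo)$ through the idempotent representation. First I would introduce the map $\Phi:\BC\to\mC(\bo)\times\mC(\bo)$ given by $\Phi(w)=(\mathcal{P}_1(w),\mathcal{P}_2(w))=(z_1-z_2\bo,\,z_1+z_2\bo)$ for $w=z_1+z_2\bt$. By Theorem~\ref{TheoNormBC} one has $\|w\|=\sqrt{(|\mathcal{P}_1(w)|^2+|\mathcal{P}_2(w)|^2)/2}$, so $\Phi$ is a bijection scaling Euclidean distance by a constant factor, hence a homeomorphism. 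By the very definition of $\times_e$, a bicomplex number $w$ lies in $X=X_1\times_e X_2$ exactly when $\mathcal{P}_1(w)\in X_1$ and $\mathcal{P}_2(w)\in X_2$, so $\Phi(X)=X_1\times X_2$. Consequently $\Phi(\pa X)=\pa(X_1\times X_2)$, and everything reduces to describing the boundary of a product.

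Before doing so I would record a convenient neighborhood basis. The discus $D(a;r_1,r_2)=B^1(a_1,r_1)\times_e B^1(a_2,r_2)$ is precisely $\Phi^{-1}$ of a product of two open balls, and Theorem~\ref{TheoInclBC} squeezes every such discus between two balls $B^2$ (and, conversely, every ball contains a discus). Thus disci and balls are mutually cofinal, so disci form a neighborhood basis of $\BC$; equivalently, the metric topology of $\BC$ is the product topology carried over by $\Phi$. This lets me test membership in interiors, closures and boundaries using disci, i.e.\ using products of balls in the two idempotent coordinates.

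The heart of the argument is then the standard necessary condition for a point to lie on the boundary of a product. I would argue by contradiction: suppose $w=z_1+z_2\bt\in\pa X$ but, writing $w_1=z_1-z_2\bo$ and $w_2=z_1+z_2\bo$, that $w_1\notin\pa X_1$ and $w_2\notin\pa X_2$. Then each $w_k$ lies either in $\mathrm{int}\,X_k$ or in the exterior $\mC(\bo)\setminus\Ol{X_k}$. If both are interior points, a small discus about $w$ is contained in $X$, so $w\in\mathrm{int}\,X$; if at least one of them is an exterior point, a small discus about $w$ misses $X$ altogether, so $w\notin\Ol X$. In either situation we contradict $w\in\pa X=\Ol X\setminus\mathrm{int}\,X$. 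Hence $w_1\in\pa X_1$ or $w_2\in\pa X_2$, which is the claim.

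I expect the only real work to be the translation between metric neighborhoods in $\BC$ and product neighborhoods in the idempotent coordinates; this is exactly the content of Theorems~\ref{TheoNormBC} and~\ref{TheoInclBC}, so once $\Phi$ and the discus neighborhood basis are set up, the case analysis is formal. Notably, no property of Julia sets or of the map $P_c$ enters: the lemma is a general topological statement about bicomplex cartesian sets, and only its ``necessary condition'' direction holds, which is presumably why the full characterization is postponed to the later results.
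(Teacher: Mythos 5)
Your proof is correct, but it takes a genuinely different route from the paper's. The paper argues directly from the neighborhood definition of the boundary: for each $r>0$ it picks points of $X$ and $X^c$ in $B^2\bigl(w,\tfrac{r}{\sqrt{2}}\bigr)$, uses Theorem~\ref{TheoNormBC} to push them into the coordinate balls $B^1(w_i,r)$, and concludes that \emph{for that particular} $r$ at least one coordinate ball meets both $X_i$ and $X_i^c$. Since which coordinate works may a priori depend on $r$, the paper then needs a pigeonhole/subsequence argument over the radii $r_n=\tfrac{1}{n}$ (together with monotonicity of balls in $r$) to fix a single coordinate uniformly, i.e.\ to pass from ``for all $r$, case (1) or case (2) holds'' to ``case (1) holds for all $r$, or case (2) holds for all $r$.'' You avoid this entirely by arguing the contrapositive: if $w_1\notin\pa X_1$ and $w_2\notin\pa X_2$, each coordinate is interior or exterior to its factor, and these are stable (open) conditions, so a single discus witnesses either $w\in\mathrm{int}\,X$ or $w\notin\Ol{X}$, contradicting $w\in\pa X$; the only infrastructure needed is that disci are squeezed between balls, which is Theorem~\ref{TheoInclBC} (the paper's proof of this lemma uses only Theorem~\ref{TheoNormBC}). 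Your framing via the similarity $\Phi$ also makes transparent that this is the standard product-boundary fact, and it would render the paper's subsequent full characterization $\pa X = \(\pa X_1 \times_e X_2\) \cup \(X_1 \times_e \pa X_2\) \cup \(\pa X_1 \times_e \pa X_2\)$ nearly immediate from the classical formula $\pa(A\times B)=(\pa A\times \Ol{B})\cup(\Ol{A}\times\pa B)$; the cost is a slightly heavier setup, while the paper's argument stays closer to raw definitions at the price of the more delicate quantifier-exchange step.
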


\begin{proof}
Let $w = z_1 + z_2 \bt \in \pa X$. From the idempotent representation, we may write $w = w_1 \eo + w_2 \et$ with $w_1 = z_1 - z_2 \bo$ and $w_2 = z_1 + z_2 \bo$. By the definition of the boundary of $X$, we know that
$$ \forall r >0, \quad \underbrace{B^2(w,r)\cap X \neq \emptyset}_{(1)} \quad \textrm{ and } \quad \underbrace{B^2(w,r)\cap X^c \neq \emptyset}_{(2)}.$$
We need to show that
\begin{eqnarray*}
\forall r > 0, \; B^1(w_1,r)\cap X_1 \neq \emptyset & \textrm{ and } & B^1(w_1,r)\cap X_1^c \neq \emptyset\\
\textrm{or} \quad \forall r > 0, \; B^1(w_2,r)\cap X_2 \neq \emptyset & \textrm{ and } & B^1(w_2,r)\cap X_2^c \neq \emptyset.
\end{eqnarray*}
Set $r>0$. From $(1)$, there exists $s \in X$ such that $s \in B^2 (w, \frac{r}{\sqrt{2}})$. It follows that $s = s_1 \eo + s_2 \et$ with $s_1 \in X_1$ and $s_2 \in X_2$ and $||s-w|| = || (s_1 - w_1) \eo + (s_2 - w_2) \et|| < \frac{r}{\sqrt{2}}$ implies $\sqrt{\frac{|s_1 - w_1|^2 + |s_2 - w_2|^2}{2}} < \frac{r}{\sqrt{2}}$ from Theorem \ref{TheoNormBC}. Thus, $| s_1 - w_1 | < r$ and $|s_2 - w_2| < r$. Therefore, $s_1 \in B^1 (w_1, r) \cap X_1$ and $s_2 \in B^1 (w_2, r) \cap X_2$. \\

From $(2)$, there exists $t \in X^c$ such that $t \in B^2(w, \frac{r}{\sqrt{2}})$. Along the same reasoning as before, we may write $t = t_1 \eo + t_2 \et$ with $t_1 \in B^1 (w_1, r)$ and $t_2 \in B^1 (w_2, r)$. Since $t$ does not belong to the bicomplex cartesian set $X$, there are three possible situations regarding $t_1$ and $t_2$ :
\begin{description}
  \item \emph{(a)} If $t_1 \not\in X_1$ and $t_2 \in X_2$, then $t_1 \in B^1 (w_1, r) \cap X_1 ^c$.
  \item \emph{(b)} If $t_1 \in X_1$ et $t_2 \not\in X_2$, then $t_2 \in B^1(w_2, r) \cap X_2^c$.
  \item \emph{(c)} If $t_1 \not\in X_1$ and $t_2 \not\in X_2$, then $t_1 \in B^1(w_1, r) \cap X_1^c$ and $t_2 \in B^1(w_2, r) \cap X_2^c$.
\end{description}
Thus for all $r>0$,
\begin{eqnarray}
B^1(w_1,r)\cap X_1 \neq \emptyset & \textrm{ and } & B^1(w_1,r)\cap X_1^c \neq \emptyset \label{cas1}\\
\textrm{or} \quad B^1(w_2,r)\cap X_2 \neq \emptyset & \textrm{ and } & B^1(w_2,r)\cap X_2^c \neq \emptyset. \label{cas2}
\end{eqnarray}
Consider the following infinite set : $\{ r_n = \frac{1}{n} \mid n = 1,2, \dots \}$. For these $r$ values, one of the two previous cases has to be verified an infinite number of times. Suppose, without loss of generality, it is case (\ref{cas1}). Then, there exists an infinite subsequence $\{ r_{n_k} \mid k = 1, 2 , \dots \}$ such that $B^1 (w_1, r_{n_k}) \cap X_1 \neq \emptyset$ and $B^1 (w_1, r_{n_k}) \cap X_1 ^c \neq \emptyset$ for all $k \in \{1,2, \dots\}$. Consequently, for any $r' > 0$, there exists $n', k \in {1,2, \dots}$ such that $\frac{1}{n'} < r'$ and $ r_{n_k} = \frac{1}{n_k} < \frac{1}{n'}$ meaning $B^1(w_1, r')\cap X_1 \neq \emptyset$ and $B^1(w_1, r') \cap X_1 ^c \neq \emptyset$. Therefore, the desired property is valid for all $r>0$ and $w_1 \in \pa X_1$. Similarly, we have $w_2 \in \pa X_2$ when case (\ref{cas2}) is investigated. Hence, we conclude $w_1 \in \pa X_1$ or $w_2 \in \pa X_2$.
\end{proof}$\Box$\\

\begin{lemma}
  Let $X_1, X_2 \subseteq \mC (\bo)$ be nonempty and $X \subseteq \BC$ such that $X = X_1 \times_e X_2$.
  \begin{description}
    \item (i) If $w_1 ^0 \in \pa X_1$, then $w \in \pa X$ for all $w \in \{w_1 ^0\} \times_e X_2$.
    \item (ii) If $w_2 ^0 \in \pa X_2$, then $w \in \pa X$ for all $w \in X_1 \times_e \{w_2 ^0\}$.
  \end{description}
  \label{Lemma2}
\end{lemma}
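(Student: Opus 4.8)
The plan is to prove (i) directly from the definition of the boundary and then to obtain (ii) by the symmetric argument with the two idempotent components interchanged. So fix $w_1^0 \in \pa X_1$ and let $w \in \{w_1^0\} \times_e X_2$, which means $w = w_1^0 \eo + w_2 \et$ for some $w_2 \in X_2$. To conclude $w \in \pa X$, I must show that for every $r > 0$ the ball $B^2(w,r)$ meets both $X$ and its complement $X^c$.

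Fix such an $r > 0$. Because $w_1^0 \in \pa X_1$, the definition of boundary in $\mC(\bo)$ furnishes points $s_1 \in X_1 \cap B^1(w_1^0, r)$ and $t_1 \in X_1^c \cap B^1(w_1^0, r)$. I would then form $s = s_1 \eo + w_2 \et$ and $t = t_1 \eo + w_2 \et$. Since $s_1 \in X_1$ and $w_2 \in X_2$, the point $s$ belongs to the cartesian set $X = X_1 \times_e X_2$; since $t_1 \notin X_1$ and the idempotent representation is unique, $t$ cannot belong to $X$, so $t \in X^c$. The decisive observation is that $s - w = (s_1 - w_1^0)\eo$ and $t - w = (t_1 - w_1^0)\eo$ both have zero second idempotent component, so Theorem \ref{TheoNormBC} yields $\|s - w\| = |s_1 - w_1^0| / \sqrt{2} < r / \sqrt{2} < r$ and likewise $\|t - w\| < r$. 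Hence $s \in B^2(w,r) \cap X$ and $t \in B^2(w,r) \cap X^c$, which is exactly what is required.

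As $r > 0$ was arbitrary, $w$ satisfies both defining conditions of a boundary point, so $w \in \pa X$ and (i) is proved; part (ii) follows from the identical construction, now perturbing the second idempotent slot around a boundary point $w_2^0 \in \pa X_2$ while keeping the fixed first component $w_1 \in X_1$. I expect no genuine obstacle: the only step needing care is the passage between the complex modulus $|\cdot|$ on the factor $\mC(\bo)$ and the real modulus $\|\cdot\|$ on $\BC$, and this is dispatched by Theorem \ref{TheoNormBC}, which merely introduces the harmless factor $1/\sqrt{2}$. One could instead route the estimates through the ball--discus inclusions of Theorem \ref{TheoInclBC}, but the direct norm computation is shorter.
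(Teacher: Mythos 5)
Your proof is correct and follows essentially the same route as the paper's: fix $r>0$, pull boundary witnesses $s_1 \in X_1 \cap B^1(w_1^0,r)$ and $t_1 \in X_1^c \cap B^1(w_1^0,r)$, lift them to $s_1\eo + w_2\et \in X$ and $t_1\eo + w_2\et \in X^c$, and control $\|\cdot\|$ via Theorem \ref{TheoNormBC}. Your only (harmless) refinements are computing the exact factor $1/\sqrt{2}$ where the paper uses the cruder bound $\sqrt{\frac{|s_1-w_1^0|^2}{2}} \leq |s_1-w_1^0|$, and explicitly invoking uniqueness of the idempotent representation to justify $t \notin X$.
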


\begin{proof}
We prove the first statement of the lemma. Let $w_1 ^0 \in \pa X_1$ and $w = w_1 ^0 \eo + w_2 \et$ for some $w_2 \in X_2$. Since $w_1 ^0 \in \pa X_1$, then
$$\forall r > 0, \quad \underbrace{B^1(w_1^0,r)\cap X_1 \neq \emptyset}_{(3)} \quad \textrm{ and } \quad \underbrace{B^1(w_1^0,r)\cap X_1^c \neq \emptyset}_{(4)}.$$
Set $r>0$. From $(3)$, there exists $s_1 \in X_1$ such that $s_1 \in B^1 (w_1 ^0, r)$. Set $s = s_1 \eo + w_2 \et \in X$. Then
\begin{eqnarray*}
  || s - w || & = & \sqrt{\frac{|s_1-w_1^0|^2 + |w_2-w_2|^2}{2}}\\
    & \leq & \sqrt{|s_1-w_1^0|^2}\\
    & < & r.
\end{eqnarray*}
Therefore, $s \in B^2 (w,r) \cap X$ and $B^2 (w,r) \cap X \neq \emptyset$. From $(4)$, there exists $t_1 \not\in X_1$ such that $t_1 \in B^1 (w_1 ^0, r)$. So $t = t_1 \eo + w_2 \et \not\in X$. Again we get $|| t - w || \leq | t_1 - w_1 ^0 | < r$ and $t \in B^2 (w,r) \cap X^c$. Hence, for all $ r >0$ we have $B^2(w,r) \cap X \neq \emptyset$ and $B^2 (w,r) \cap X^c \neq \emptyset$, that is $w \in \pa X$. The proof of the second statement goes along the same lines as this one.
\end{proof}$\Box$\\

We can now state the following theorem that directly leads to a useful point of view on bicomplex Julia sets.

\begin{theorem}
  Let $X_1, X_2 \subseteq \mC (\bo)$ be nonempty and $X \subseteq \BC$ such that $X = X_1 \times_e X_2$. The boundary of $X$ if the union of three bicomplex cartesian sets:
  $$\pa X = \(\pa X_1 \times_e X_2\) \cup \(X_1 \times_e \pa X_2\) \cup \(\pa X_1 \times_e \pa X_2\).$$
\end{theorem}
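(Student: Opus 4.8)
The plan is to establish the two inclusions separately, using Lemma \ref{Lemma2} for the containment $\supseteq$ and Lemma \ref{Lemma1} for the containment $\subseteq$. Throughout I write a generic bicomplex number in its idempotent form $w = w_1 \eo + w_2 \et$, so that membership in a cartesian set $Y_1 \times_e Y_2$ is simply the conjunction $w_1 \in Y_1$ and $w_2 \in Y_2$, and I use Theorem \ref{TheoNormBC} to convert the bicomplex modulus $\|s - w\|$ into the pair of complex moduli $|s_1 - w_1|$ and $|s_2 - w_2|$.

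For $\supseteq$, I would treat the three cartesian sets in the union one at a time. If $w \in \pa X_1 \times_e X_2$, then $w_1 \in \pa X_1$ and $w_2 \in X_2$, so $w \in \{w_1\} \times_e X_2$ and Lemma \ref{Lemma2}(i) gives $w \in \pa X$; the set $X_1 \times_e \pa X_2$ is handled symmetrically by Lemma \ref{Lemma2}(ii). The delicate case is the third set $\pa X_1 \times_e \pa X_2$: here $w_1 \in \pa X_1$ and $w_2 \in \pa X_2$, but since the $X_i$ need not be open or closed, $w_2$ may fail to lie in $X_2$ and $w_1$ may fail to lie in $X_1$, so neither part of Lemma \ref{Lemma2} applies directly. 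I would resolve this by a direct verification modeled on the proof of Lemma \ref{Lemma2}: given $r > 0$, choose $s_1 \in B^1(w_1, r) \cap X_1$ and $s_2 \in B^1(w_2, r) \cap X_2$ (possible since $w_1 \in \pa X_1$ and $w_2 \in \pa X_2$), so that $s = s_1 \eo + s_2 \et \in X$ satisfies $\|s - w\| < r$ by Theorem \ref{TheoNormBC}, and choose $t_1 \in B^1(w_1, r) \cap X_1^c$, so that $t = t_1 \eo + w_2 \et \notin X$ satisfies $\|t - w\| < r$. This shows every ball $B^2(w,r)$ meets both $X$ and $X^c$, hence $w \in \pa X$. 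Equivalently, one may observe that $\pa X_1 \times_e \pa X_2 \subseteq \overline{\pa X_1 \times_e X_2} \subseteq \overline{\pa X} = \pa X$, since $\pa X_2 \subseteq \overline{X_2}$ and $\pa X$ is closed.

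For $\subseteq$, let $w = w_1 \eo + w_2 \et \in \pa X$. Lemma \ref{Lemma1} gives $w_1 \in \pa X_1$ or $w_2 \in \pa X_2$, but this alone does not locate the other component. The key additional step is to show $w_1 \in \overline{X_1}$ and $w_2 \in \overline{X_2}$. Indeed, if $w_1$ lay in the exterior of $X_1$, there would be $\ep > 0$ with $B^1(w_1, \ep) \cap X_1 = \emptyset$; then Theorem \ref{TheoNormBC} forces every $s \in B^2(w, \frac{\ep}{\sqrt 2})$ to have $|s_1 - w_1| < \ep$, whence $s_1 \notin X_1$ and $s \notin X$, so $B^2(w, \frac{\ep}{\sqrt 2}) \cap X = \emptyset$, contradicting $w \in \pa X$; the same argument applies to $w_2$. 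Combining this with Lemma \ref{Lemma1}, if $w_1 \in \pa X_1$ then $w_2 \in \overline{X_2} = X_2 \cup \pa X_2$, placing $w$ in $(\pa X_1 \times_e X_2) \cup (\pa X_1 \times_e \pa X_2)$; the case $w_2 \in \pa X_2$ is symmetric. Thus $w$ belongs to the stated union and the inclusion holds.

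I expect the main obstacle to be the third cartesian set in the $\supseteq$ direction together with the exterior argument in the $\subseteq$ direction — precisely the two places where Lemmas \ref{Lemma1} and \ref{Lemma2} do not suffice verbatim and one must return to the definition of the boundary and invoke Theorem \ref{TheoNormBC} to pass between the $\BC$-modulus and its complex components. Everything else is bookkeeping with the idempotent representation.
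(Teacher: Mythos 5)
Your proposal is correct and follows essentially the same route as the paper: the $\supseteq$ inclusion via Lemma \ref{Lemma2} plus a direct boundary verification for $\pa X_1 \times_e \pa X_2$ (the paper runs this with convergent sequences rather than balls of radius $r$, a trivial difference), and the $\subseteq$ inclusion via Lemma \ref{Lemma1} plus the observation that $w \in \pa X$ forces each idempotent component into $\overline{X_i}$ (the paper phrases this same step as a contradiction, re-using the first part of the proof of Lemma \ref{Lemma1} to show the missing component cannot lie outside $X_i \cup \pa X_i$). Both the key supplementary steps you identified are exactly the ones the paper supplies.
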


\begin{proof}
First, we demonstrate that $\pa X \subseteq \(\pa X_1 \times_e X_2\) \cup \(X_1 \times_e \pa X_2\) \cup \(\pa X_1 \times_e \pa X_2\)$. To begin with, note that Lemma \ref{Lemma1} may be rewritten as
$$\pa X \subseteq \(\pa X_1 \times_e \mC (\bo)\) \cup \( \mC(\bo) \times_e \pa X_2\) \cup \( \pa X_1 \times_e \pa X_2\).$$
Let $w = w_1 \eo + w_2 \et \in \pa X$. According to the last expression, one of these three situations presents itself :
\begin{description}
  \item \emph{(a)} $w_1 \in \pa X_1$ and $w_2 \in \mC (\bo)$
  \item \emph{(b)} $w_1 \in \mC (\bo)$ and $w_2 \in \pa X_2$
  \item \emph{(c)} $w_1 \in \pa X_1$ and $w_2 \in \pa X_2$.
\end{description}
We need to replace $\mC (\bo)$ by $X_2$ and $X_1$ respectively in the first two cases.
\begin{description}
  \item \emph{(a)} Suppose $w_1 \in \pa X_1$ and $w_2 \in \mC (\bo)$. We must have $w_2 \in \pa X_2$ or $w_2 \not\in \pa X_2$. If $w_2 \in \pa X_2$, then $w \in \pa X_1 \times_e \pa X_2$ which corresponds to \emph{(c)}. Suppose $w_2 \not\in \pa X_2$ and $w_2 \not\in X_2$. For all $r>0$, $B^1(w_2, r) \cap X_2 ^c \neq \emptyset$. Since $w \in \pa X$, from the first part of the proof of Lemma \ref{Lemma1}, we know that $B^1(w_2, r) \cap X_2 \neq \emptyset$ for all $r>0$. Hence, $w_2 \in \pa X_2$ which is a contradiction with our assumption. Therefore, $w_2 \in X_2$ and $w \in \pa X_1 \times_e X_2$.
  \item \emph{(b)} Suppose $w_1 \in \mC (\bo)$ and $w_2 \in \pa X_2$. We must have $w_1 \in \pa X_1$ or $w_1 \not\in \pa X_1$. If $w_1 \in \pa X_1$, then $w \in \pa X_1 \times_e \pa X_2$ which corresponds to \emph{(c)}. If $w_1 \not\in \pa X_1$, then as in \emph{(a)} we conclude that $w_1 \in X_1$ and $w \in X_1 \times_e \pa X_2$.
\end{description}
Consequently, the possible outcomes become $w \in \pa X_1 \times_e X_2$, $w \in X_1 \times_e \pa X_2$ or $w \in \pa X_1 \times_e \pa X_2$ and the first part of the proof is complete. \\

On the other hand, we determine that $ \(\pa X_1 \times_e X_2 \) \cup \(X_1 \times_e \pa X_2\) \cup \( \pa X_1 \times_e \pa X_2\) \subseteq \pa X$. From Lemma \ref{Lemma2}, we know that $\( \pa X_1 \times_e X_2 \) \cup \( X_1 \times_e \pa X_2 \) \subseteq \pa X$. We only need to show that if $ w \in \pa X_1 \times_e \pa X_2$, then $w \in \pa X$. Let $w = w_1 \eo + w_2 \et$ with $w_1 \in \pa X_1$ and $w_2 \in \pa X_2$. We use the definition of the boundary of a set with converging sequences. There exists sequences $\{ p_n ^1 \}_{n=1}^{\infty} \subseteq X_1$ and $\{ q_n ^1 \}_{n=1}^{\infty} \subseteq X_1 ^c$ that both converge to $w_1$. Likewise, there exists sequences $\{ p_n ^2 \}_{n=1}^{\infty} \subseteq X_2$ and $\{ q_n ^2 \}_{n=1}^{\infty} \subseteq X_2 ^c$ that both converge to $w_2$. Therefore, sequences $\{ p_n := p_n ^1 \eo + p_n ^2 \et \}_{n=1}^{\infty} \subseteq X$ and $\{ q_n := q_n ^1 \eo + q_n ^2 \et\}_{n=1}^{\infty} \subseteq X^c$ both converge to $w$. Hence, $w \in \pa X$ and $\( \pa X_1 \times_e X_2\) \cup \( X_1 \times_e \pa X_2 \) \cup \( \pa X_1 \times_e \pa X_2 \) \subseteq \pa X$.
\end{proof}$\Box$\\

This theorem allows a better understanding of the structure of bicomplex Julia sets, as given in the next corollary. We see that the set is solely determined by complex Julia sets and filled-in Julia sets associated to the idempotent components of $c$.

\begin{corollary}
  The bicomplex Julia set corresponding to $c = c_1 + c_2 \bt \in \BC$ may be expressed as
  $$\mJ_{2,c} = \( \mJ_{c_1 - c_2 \bo} \times_e \mK_{c_1 + c_2 \bo} \) \cup \( \mK_{c_1 - c_2 \bo} \times_e \mJ_{c_1 + c_2 \bo} \)$$
  \label{corJulia}
  where $\mJ_{c_1 - c_2 \bo} \times_e \mJ_{c_1 + c_2 \bo}\subseteq \mJ_{2,c}$.
\end{corollary}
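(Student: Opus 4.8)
The plan is to obtain the corollary as a direct reading of the boundary characterization theorem applied to the filled-in Julia set, so that essentially all of the analytic work has already been carried out. First I would invoke the theorem asserting $\mK_{2,c} = \mK_{c_1 - c_2 \bo} \times_e \mK_{c_1 + c_2 \bo}$, which exhibits $\mK_{2,c}$ as a bicomplex cartesian set $X_1 \times_e X_2$ with $X_1 = \mK_{c_1 - c_2 \bo}$ and $X_2 = \mK_{c_1 + c_2 \bo}$. Both factors are nonempty, since each contains the fixed points of the corresponding complex polynomial, so the hypotheses of the boundary theorem are satisfied. Applying it gives
$$\mJ_{2,c} = \pa \mK_{2,c} = (\pa X_1 \times_e X_2) \cup (X_1 \times_e \pa X_2) \cup (\pa X_1 \times_e \pa X_2).$$
Because the complex Julia set is by definition the boundary of the corresponding filled-in Julia set, we have $\pa X_1 = \mJ_{c_1 - c_2 \bo}$ and $\pa X_2 = \mJ_{c_1 + c_2 \bo}$, which rewrites the right-hand side as the union of the two advertised cartesian sets together with the extra term $\mJ_{c_1 - c_2 \bo} \times_e \mJ_{c_1 + c_2 \bo}$.

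Next I would show that this extra term is superfluous because it is already contained in one of the other two. The decisive observation is that each filled-in Julia set is closed (indeed compact), hence contains its own boundary, so that $\mJ_{c_1 + c_2 \bo} = \pa \mK_{c_1 + c_2 \bo} \subseteq \mK_{c_1 + c_2 \bo}$. Forming the cartesian product with $\mJ_{c_1 - c_2 \bo}$ on the left then yields $\mJ_{c_1 - c_2 \bo} \times_e \mJ_{c_1 + c_2 \bo} \subseteq \mJ_{c_1 - c_2 \bo} \times_e \mK_{c_1 + c_2 \bo}$, so the third set is absorbed into the first and the three-term union collapses to the stated two-term expression for $\mJ_{2,c}$.

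Finally, the inclusion $\mJ_{c_1 - c_2 \bo} \times_e \mJ_{c_1 + c_2 \bo} \subseteq \mJ_{2,c}$ is immediate: it is precisely the third cartesian set that already appears in the boundary theorem, hence part of $\pa \mK_{2,c} = \mJ_{2,c}$. I do not expect any genuine obstacle here, as the statement is a bookkeeping consequence of the preceding theorem; the one point that deserves explicit mention is the containment of a boundary in its set, which rests on the standard fact that filled-in Julia sets are closed. I would be careful to recall or cite that property, since it is exactly what legitimizes the reduction from three cartesian pieces to two.
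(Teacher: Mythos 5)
Your proposal is correct and follows exactly the route the paper intends: the paper states the corollary as an immediate consequence of the boundary theorem applied to $\mK_{2,c} = \mK_{c_1 - c_2 \bo} \times_e \mK_{c_1 + c_2 \bo}$, and your write-up supplies the same reading, including the one detail worth making explicit --- that $\mJ_{c_1 - c_2 \bo} \times_e \mJ_{c_1 + c_2 \bo}$ is absorbed into $\mJ_{c_1 - c_2 \bo} \times_e \mK_{c_1 + c_2 \bo}$ because filled-in Julia sets are closed and hence contain their boundaries.
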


This corollary provides a useful insight on bicomplex Julia sets that leads to an easy display in the usual 3D space. Indeed, for a certain $c = c_1 + c_2 \bt \in \BC$, it suffices to generate points of $\mJ_{c_1 - c_2 \bo}$, $\mJ_{c_1 + c_2 \bo}$, $\mK_{c_1 - c_2 \bo}$ and $\mK_{c_1 + c_2 \bo}$ and create the bicomplex cartesian sets of the characterization using the idempotent representation. Rewriting the elements of the set under the representation $a + b \bo + c \bt + d \bj$, a 3D cut is applied by keeping only the points for which the absolute value of $d$ (the $\bj$-component) is less than a certain value $\epsilon > 0$. The display of the other components in the 3D space produces an approximation of $\mJ_{2,c}$ that we may observe.\\

Three examples are given in Figure \ref{ImJulia3d1}. Note that for images (a) and (b), shapes of complex Julia sets from Figure \ref{ImJulia2d} are easily noticeable. For $c = 0,0635 + 0,3725 \bo + 0,3725 \bt + 0,1865 \bj$, we have $c_1 - c_2 \bo = 0,25$ and $c_1 + c_2 \bo = -0,123+0,745 \bo$. Both complex shapes may be seen when moving the object around. The color black represents the set $\mJ_{c_1 - c_2 \bo} \times_e \mJ_{c_1 + c_2 \bo}$ from Corollary \ref{corJulia} (the one determined by complex Julia sets) and provides the main structure of the set.

\begin{figure}[!h]
  \centering
  \subfloat[$c = 0,25$]{\includegraphics[width=7cm]{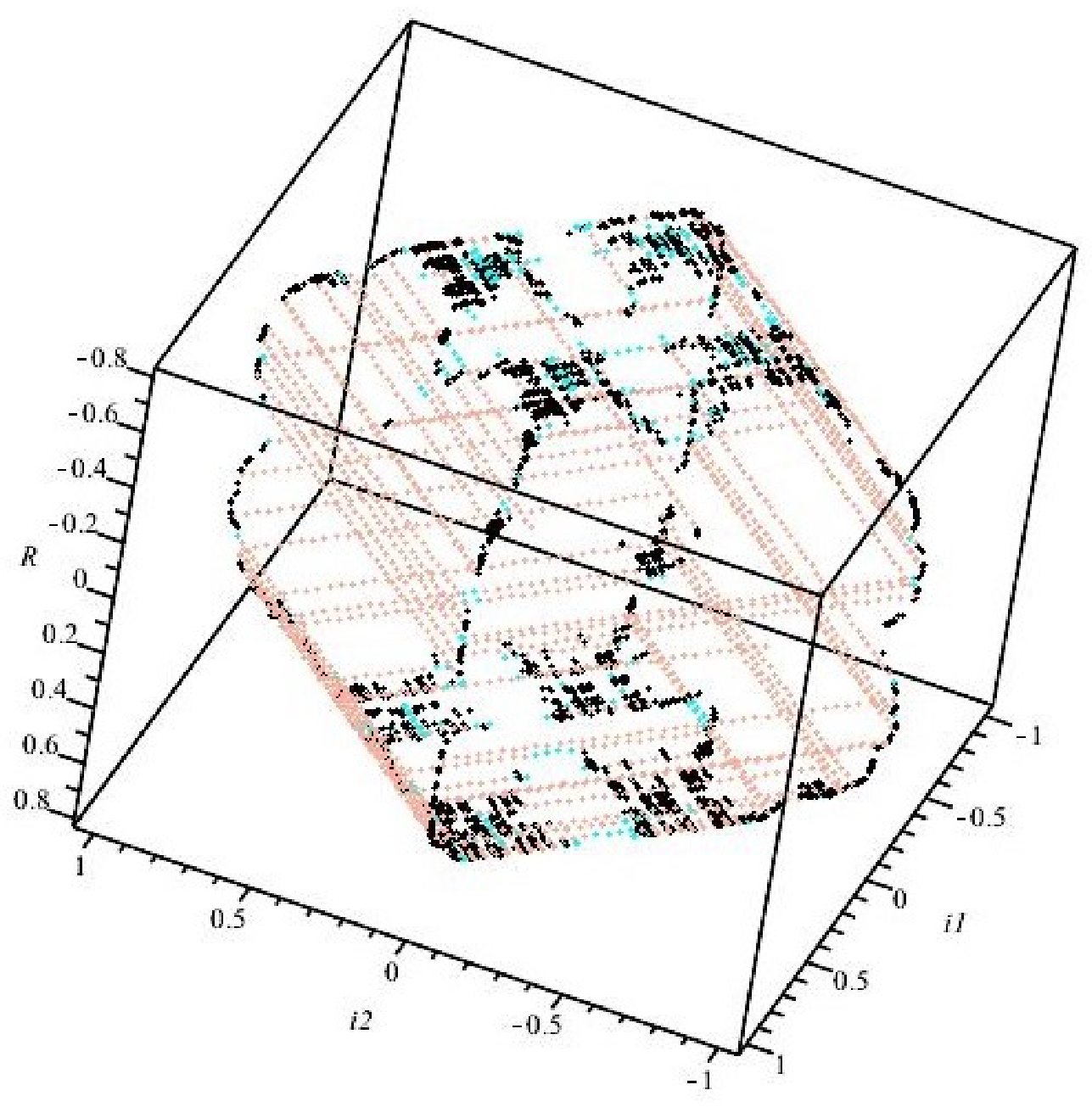}}
  \subfloat[$c = -0,123+0,745\bo$]{\includegraphics[width=7cm]{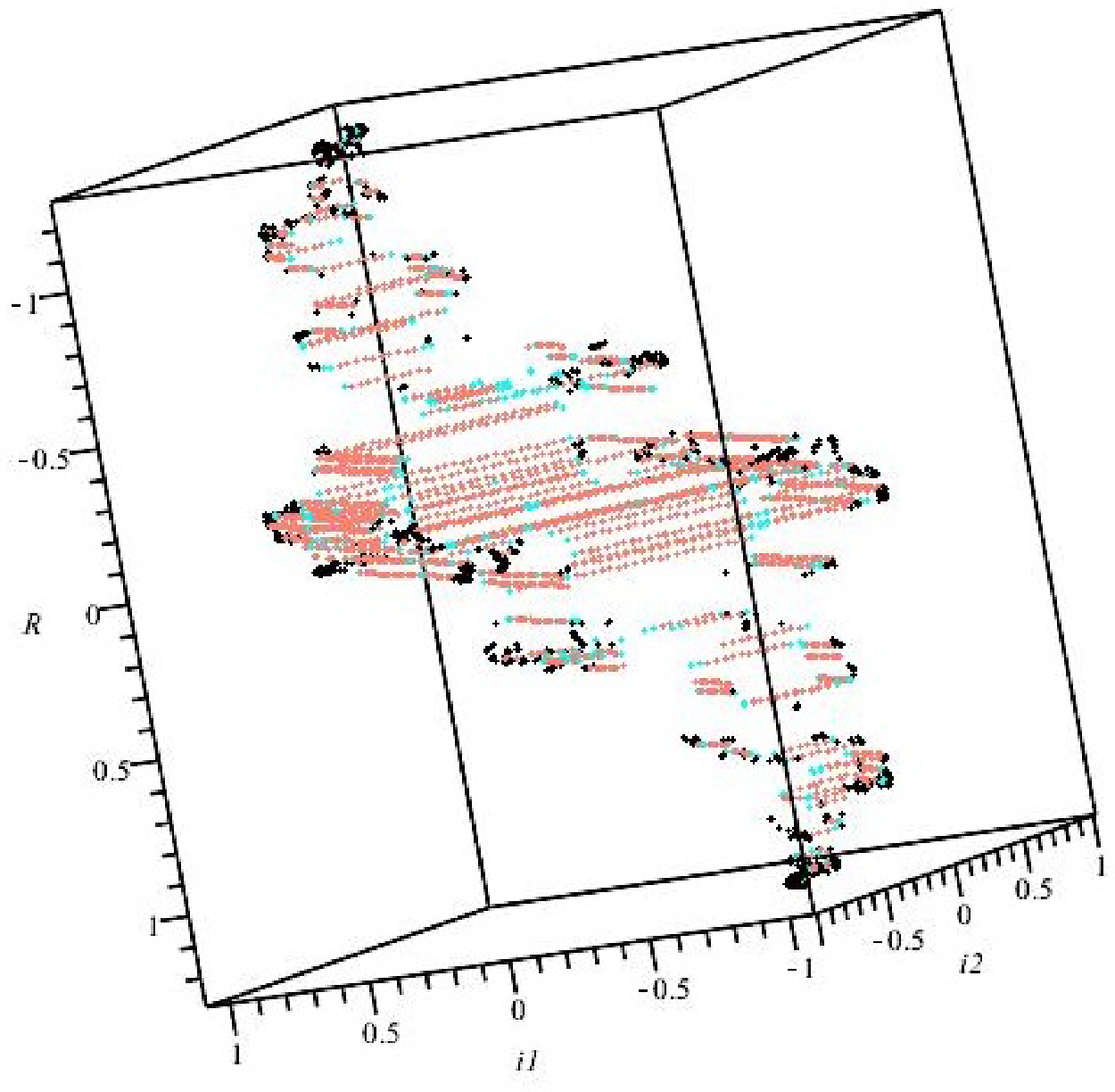}}\\
  \subfloat[$c = 0,0635 + 0,3725 \bo + 0,3725 \bt + 0,1865 \bj$]{\includegraphics[width=8cm]{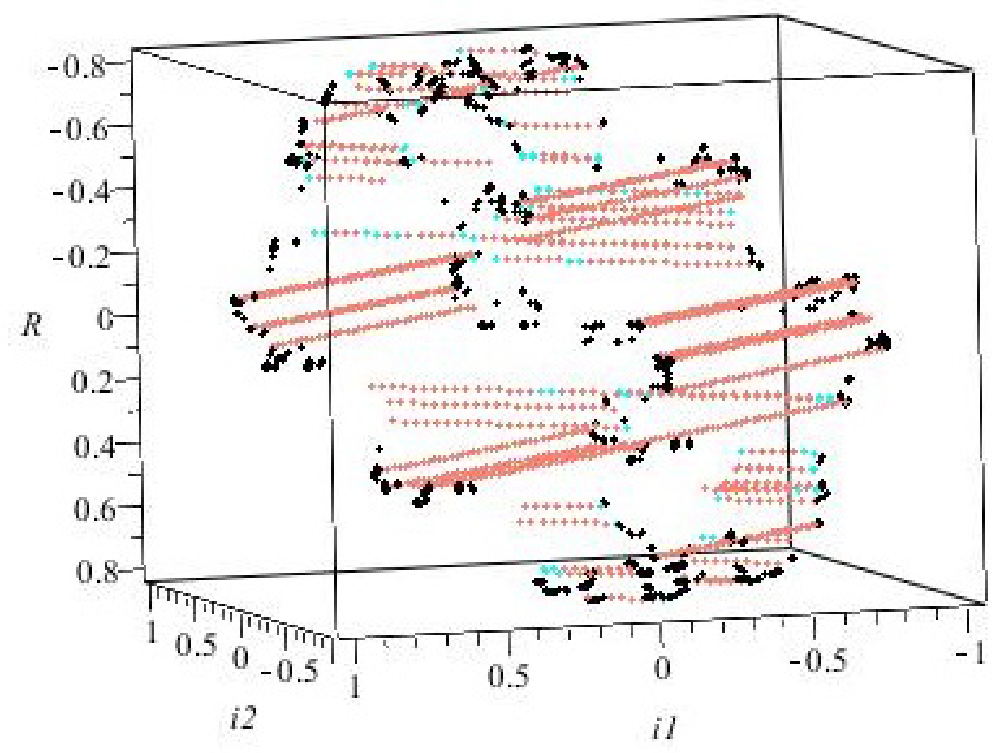}}
  \caption{Bicomplex Julia sets in 3D space}
  \label{ImJulia3d1}
\end{figure}

\subsection{Normal Families}
As in one complex variable, another way to define bicomplex Julia sets is to use the concept of normal families. Let us first recall the following definition of bicomplex normality introduced in \cite{charak}.

\begin{definition}  The family $\boldsymbol F$ of bicomplex holomorphic functions defined on a domain $D \subseteq \BC$ is said to be
\textbf{normal} in $D$ if every sequence in $\boldsymbol F$ contains a subsequence which on compact subsets of $D$ either:
1. converges uniformly to a limit function
or
2. converges uniformly to $\infty$.
The family $\boldsymbol F$ is said to be \textbf{normal at a point}
$z\in D$ if it is normal in some neighborhood of $z$ in $D$.
\label{compnormal}
\end{definition}
\begin{remark}
We say that the sequence $\{w_n\}$ of bicomplex numbers converges to
$\infty$ if and only if the norm $\{\|w_n\|\}$ converges to
$\infty$.
\end{remark}

With this definition we have this other possible definition of bicomplex Julia sets.

\begin{definition}
Let $P(w)$ be a bicomplex polynomial. We define the bicomplex Julia set for $P$ as
\begin{equation}
\mathcal{J}_{2}(P)=\{w\in\BC \mid \{P^{\circ n}(w)\}\mbox{ is not normal}\}.
\label{fund}
\end{equation}
\end{definition}

Moreover, from Theorem $11$ of \cite{charak}, we know that if the projection $\boldsymbol F_{ei}=\mathcal{P}_{i}(\boldsymbol F)$ is normal on $\mathcal{P}_{i}(D)$ for $i=1,2$ then the family of bicomplex holomorphic functions $\boldsymbol F$ is also normal on $D$. Hence, we obtain the following inclusion:
\begin{equation}
\mathcal{J}_{2}(P) \subset \mbox{\large \{ }z_1+z_2\bt\in\BC\mid \{[\mathcal{P}_1(P)]^{\circ n}(z_1-z_2\bo)\}\mbox{ or }
\{[\mathcal{P}_2(P)]^{\circ n}(z_1+z_2\bo)\}\mbox{ is not normal \large \}}.
\label{inc}
\end{equation}

From explicit counter examples (see \cite{charak}), we know also that (\ref{inc}) cannot be transformed into an equality.
In fact, in the particular case of the bicomplex holomorphic polynomial $P_c(w)=w^2+c$ where $\boldsymbol F:=\{P^{\circ n}_c,\mbox{ }\forall n\in\mathbb{N}\}$,
we have this following characterization of normality.

\begin{theorem}
Let $\boldsymbol F$ be a family of the iterates of the bicomplex polynomial $P_c(w)=w^2+c$. The family $\boldsymbol F$ is normal at $w_0\in\BC$ if and only if 1. $\boldsymbol F_{ei}=\mathcal{P}_{i}(\boldsymbol F)$ is normal at $\mathcal{P}_{i}(w_0)$ for $i=1,2$
or 2. $\mathcal{P}_{i}(w_0)$ is in the unbounded component of the Fatou set of $\boldsymbol F_{ei}$ for $i=1$ or $2$.
\label{normal3}
\end{theorem}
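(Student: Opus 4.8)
The plan is to prove both implications by passing to the idempotent projections, on which the iterates decompose as $P_c^{\circ n}(w) = P_{\mathcal{P}_1(c)}^{\circ n}(\mathcal{P}_1(w))\eo + P_{\mathcal{P}_2(c)}^{\circ n}(\mathcal{P}_2(w))\et$, so that $\boldsymbol F_{ei}$ is exactly the family of complex iterates of $P_{\mathcal{P}_i(c)}$, its Fatou set is $\mC(\bo)\setminus\mJ_{\mathcal{P}_i(c)}$, and its unbounded Fatou component is the basin of infinity $\mC(\bo)\setminus\mK_{\mathcal{P}_i(c)}$. I write $w_i=\mathcal{P}_i(w_0)$ and use Theorem~\ref{TheoNormBC} in the form $\|P_c^{\circ n}(w)\|^2 = \tfrac12\big(|P_{\mathcal{P}_1(c)}^{\circ n}(\mathcal{P}_1(w))|^2 + |P_{\mathcal{P}_2(c)}^{\circ n}(\mathcal{P}_2(w))|^2\big)$, along with the bound $|\mathcal{P}_i(w)-\mathcal{P}_i(w')|\le\sqrt2\,\|w-w'\|$ it yields.

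For the ``if'' direction I would treat the two clauses separately. If both $\boldsymbol F_{ei}$ are normal at $w_i$, each is normal on a disc $B^1(w_i,\rho_i)$; taking the discus $D = B^1(w_1,\rho_1)\times_e B^1(w_2,\rho_2)$, which is a neighborhood of $w_0$ by Theorem~\ref{TheoInclBC} and whose projections are exactly these discs, I invoke Theorem~$11$ of \cite{charak} to conclude that $\boldsymbol F$ is normal on $D$, hence at $w_0$. If instead, say, $w_1$ lies in the basin of infinity of $\boldsymbol F_{e1}$, I choose $\rho>0$ so small that $\mathcal{P}_1(\overline{B^2(w_0,\rho)})\subseteq\overline{B^1(w_1,\sqrt2\,\rho)}$ is a compact subset of that open basin; there $P_{\mathcal{P}_1(c)}^{\circ n}\to\infty$ uniformly, so by the norm identity $\|P_c^{\circ n}(w)\|\ge |P_{\mathcal{P}_1(c)}^{\circ n}(\mathcal{P}_1(w))|/\sqrt2\to\infty$ uniformly on $B^2(w_0,\rho)$, whence $\boldsymbol F$ converges uniformly to $\infty$ and is normal at $w_0$.

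For the ``only if'' direction I would argue by contraposition, showing that if neither clause holds then $\boldsymbol F$ is not normal at $w_0$. The negation forces both $w_1\in\mK_{\mathcal{P}_1(c)}$ and $w_2\in\mK_{\mathcal{P}_2(c)}$ (so both orbits, hence $\{\|P_c^{\circ n}(w_0)\|\}$, are bounded) while at least one projection, say $w_1$, lies in $\mJ_{\mathcal{P}_1(c)}$; this is exactly the statement $w_0\in\mJ_{2,c}$ coming from Corollary~\ref{corJulia}. Since $w_1$ is a point of non-normality of $\boldsymbol F_{e1}$, I extract (via non-equicontinuity in the spherical metric) a single sequence $\{P_{\mathcal{P}_1(c)}^{\circ m_j}\}$ together with points $\zeta_j\to w_1$ and $\epsilon>0$ such that $\sigma\big(P_{\mathcal{P}_1(c)}^{\circ m_j}(\zeta_j),P_{\mathcal{P}_1(c)}^{\circ m_j}(w_1)\big)\ge\epsilon$. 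Setting $\xi_j=\zeta_j\eo + w_2\et\to w_0$, the two bicomplex orbits share the same $\et$-component, so $P_c^{\circ m_j}(\xi_j)-P_c^{\circ m_j}(w_0) = \big(P_{\mathcal{P}_1(c)}^{\circ m_j}(\zeta_j)-P_{\mathcal{P}_1(c)}^{\circ m_j}(w_1)\big)\eo$, whose norm equals $|P_{\mathcal{P}_1(c)}^{\circ m_j}(\zeta_j)-P_{\mathcal{P}_1(c)}^{\circ m_j}(w_1)|/\sqrt2$; since $P_{\mathcal{P}_1(c)}^{\circ m_j}(w_1)$ stays in the bounded set $\mK_{\mathcal{P}_1(c)}$, the spherical separation converts into a uniform Euclidean lower bound $\delta>0$. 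Thus $\boldsymbol F$ is not equicontinuous at $w_0$, and it cannot have a subsequence tending to $\infty$ either because $\{\|P_c^{\circ n}(w_0)\|\}$ is bounded. Hence $\{P_c^{\circ m_j}\}$ has no uniformly convergent subsequence on any neighborhood of $w_0$, so $\boldsymbol F$ is not normal at $w_0$.

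The main obstacle is this last direction. The delicate point is that producing one convergent subsequence of bicomplex iterates would not by itself contradict $w_1\in\mJ_{\mathcal{P}_1(c)}$; the argument must therefore start from a fixed sequence witnessing non-normality of the complex family, i.e. non-equicontinuity at the point $w_1$, and lift that failure to $\BC$. The attendant care lies in converting the spherical separation into a genuine Euclidean separation, which is precisely where boundedness of the orbit of $w_1$ (guaranteed by $w_1\in\mK_{\mathcal{P}_1(c)}$) and the exclusion of the $\infty$-limit are used.
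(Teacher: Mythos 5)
Your proof is correct and follows the paper's skeleton: the idempotent decomposition of the iterates, Theorem $11$ of \cite{charak} for clause 1 of sufficiency, the basin-of-infinity argument (via the norm identity of Theorem \ref{TheoNormBC}) for clause 2, and the same two exclusion facts for necessity. The one genuine difference is how necessity is implemented. The paper argues by contradiction directly from the definition of normality: non-normality of $\boldsymbol F_{e1}$ at $w_1\in\mJ_{\mathcal{P}_1(c)}$ yields a sequence $\{F_{1n}\}$ and a compact set $K_1$ on which no subsequence converges uniformly to a $\mC(\bo)$-function; lifting to $F_n=F_{1n}(\mathcal{P}_1(\cdot))\,\eo+F_{2n}(\mathcal{P}_2(\cdot))\,\et$, the assumed normality of $\boldsymbol F$ forces a subsequential limit which can be neither a $\BC$-function (it would project to uniform convergence on $K_1$) nor $\infty$ (the orbit of $w_0$ is bounded, both projections lying in filled-in Julia sets). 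You instead contrapose via spherical equicontinuity, lifting explicit witnesses $\zeta_j\to w_1$ and exponents $m_j$ to $\xi_j=\zeta_j\eo+w_2\et\to w_0$, and you rule out both limit types using the same two facts. The trade-off: your route needs the standard fact that every point of a complex Julia set is a point of failure of pointwise spherical equicontinuity of the iterates---true, but resting on the blow-up property (or density of repelling cycles), hence more than the bare definition of $\mJ$ as the non-normality locus, which is all the paper uses. In exchange, your argument makes explicit and quantitative the step the paper merely asserts (that no subsequence of the lifted sequence can converge uniformly to a finite limit), and your closing observation---that one must fix a single witnessing sequence rather than refute all subsequences---is precisely the point on which the paper's own proof turns. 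One nitpick: the spherical-to-Euclidean conversion needs no boundedness at all, since the chordal metric is bounded by twice the Euclidean distance; boundedness of the orbit of $w_0$ is needed only where you also invoke it, namely to exclude the $\infty$ limit.
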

\begin{proof}
From Theorem $11$ of \cite{charak}, if $\boldsymbol F_{ei}$ is normal in a neighborhood $D_i$ of $\mathcal{P}_{i}(w_0)$ for  $i=1$ and $2$, then
$\boldsymbol F$ is normal in the following neighborhood $D:=D_1\times_e D_2$ of $w_0$. Moreover, for $i=1$ or $2$, supposed that every sequence in $\boldsymbol F_{ei}$ contains a subsequence which on compact subsets of a neighborhood of $\mathcal{P}_{i}(w_0)$ converges uniformly to $\infty$. Then, since one component can be chosen to converges uniformly to $\infty$, there exist a neighborhood of $w_0$ such that every sequence of $\boldsymbol F$ will also contains a subsequence which on compact subsets converges uniformly to $\infty$.

On the other side, we assume that the family $\boldsymbol F$ is normal at $w_0$. By the Theorem \ref{TheoInclBC}, it is always possible to consider a bicomplex neighborhood $D=D_1\times_e D_2$ of $w_0$ where $\boldsymbol F$ is normal. Now, without lost of generality, consider that $\mathcal{P}_{i}(w_0)\in\mK(\mathcal{P}_{i})$ for $i=1,2$ with $\mathcal{P}_{1}(w_0)\in\mathcal{J}(\mathcal{P}_{1})$. Therefore, there exist a sequence in $\{F_{1n}\}\in\boldsymbol F_{e1}$ and a compact set $K_1\in D_1$ such that no subsequence converges uniformly on $K_1$ to a $\mathbb{C}(\bo)$-function. Since $\boldsymbol F$ is normal then the sequence $\{F_n(w):=F_{1n}(\mathcal{P}_{1}(w))\eo+F_{2n}(\mathcal{P}_{2}(w))\et\}\in \boldsymbol F$, where $\{F_{2n}\}$ is the corresponding sequence in $\boldsymbol F_{e2}$, has a subsequence $\{F_{n_k}\}$ which converges uniformly on compact subset to either a $\BC$-function or to $\infty$. However, no subsequence of $\{F_n(w)\}$ can converge uniformly on $K:=K_1\times_e K_2$, where $K_2$ is any compact set in $D_2$. Therefore, $\{F_n(w)\}$ should have a subsequence $\{F_{n_k}\}$ which converges uniformly on compact subset to $\infty$. This is a contradiction since $\{w_0\}=\mathcal{P}_{1}(w_0)\times_e \mathcal{P}_{2}(w_0)$ is a compact subset of $D$ where, by definition of the filled-in Julia sets, the sequence $\{F_n(w_0)\}$ is bounded in each component.
\end{proof}$\Box$\\

Hence, we obtain automatically this following characterization of bicomplex Julia sets.

\begin{corollary}
Let $P_c(w)=w^2+c$, then
\begin{equation}
\mathcal{J}_{2}(P_c)=\pa \mK_{2,c}=\mJ_{2,c}.
\end{equation}
\label{filled}
\end{corollary}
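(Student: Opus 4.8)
The second equality $\pa \mK_{2,c} = \mJ_{2,c}$ is immediate from the definition of the bicomplex Julia set, so the whole content lies in the first equality $\mathcal{J}_{2}(P_c) = \pa \mK_{2,c}$. The plan is to characterize the \emph{complement} of $\mathcal{J}_{2}(P_c)$, namely the set of points where $\boldsymbol F = \{P_c^{\circ n}\}$ is normal, by means of Theorem \ref{normal3}, and then negate the resulting condition. Since Theorem \ref{normal3} is an equivalence, negating it yields both inclusions at once, so no separate arguments for the two directions are required.

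First I would translate the two normality conditions of Theorem \ref{normal3} into the language of the complex dynamics of the idempotent projections. Writing $u = \mathcal{P}_1(w_0) = z_1 - z_2\bo$ and $v = \mathcal{P}_2(w_0) = z_1 + z_2\bo$, the family $\boldsymbol F_{ei}$ is the family of iterates of the complex polynomial $P_{\mathcal{P}_i(c)}$, where $\mathcal{P}_1(c) = c_1 - c_2\bo$ and $\mathcal{P}_2(c) = c_1 + c_2\bo$. By the definition of the complex Fatou set, $\boldsymbol F_{ei}$ is normal at $\mathcal{P}_i(w_0)$ exactly when $\mathcal{P}_i(w_0)$ lies off the complex Julia set; and, using the standard fact that a polynomial of degree $\geq 2$ has a single unbounded Fatou component equal to the basin of $\infty$, the point $\mathcal{P}_i(w_0)$ lies in that unbounded component exactly when it lies off the filled-in Julia set. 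Thus condition 1 reads $u \notin \mJ_{c_1 - c_2\bo}$ and $v \notin \mJ_{c_1 + c_2\bo}$, while condition 2 reads $u \notin \mK_{c_1 - c_2\bo}$ or $v \notin \mK_{c_1 + c_2\bo}$.

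Negating the disjunction of these two conditions gives: $w_0 \in \mathcal{J}_{2}(P_c)$ if and only if $u \in \mK_{c_1 - c_2\bo}$ and $v \in \mK_{c_1 + c_2\bo}$ and ($u \in \mJ_{c_1 - c_2\bo}$ or $v \in \mJ_{c_1 + c_2\bo}$). Using $\mJ = \pa \mK \subseteq \mK$ for each projection, the resulting automatic inclusions let me collapse this Boolean expression into the union $(u,v) \in (\mJ_{c_1 - c_2\bo} \times \mK_{c_1 + c_2\bo}) \cup (\mK_{c_1 - c_2\bo} \times \mJ_{c_1 + c_2\bo})$: in the clause where $u \in \mJ_{c_1-c_2\bo}$ the requirement $u \in \mK_{c_1-c_2\bo}$ is free, and symmetrically for $v$. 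Passing back through the idempotent representation, this is precisely $(\mJ_{c_1 - c_2\bo} \times_e \mK_{c_1 + c_2\bo}) \cup (\mK_{c_1 - c_2\bo} \times_e \mJ_{c_1 + c_2\bo})$, which by Corollary \ref{corJulia} equals $\mJ_{2,c} = \pa \mK_{2,c}$, completing the proof.

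I expect no serious analytic obstacle, since Theorem \ref{normal3} carries all the weight. The only delicate point is the bookkeeping: matching each normality alternative to the correct set (the Julia set for \emph{normal at the point}, the filled-in Julia set for \emph{in the unbounded Fatou component}), negating the compound statement correctly, and then reducing the three-clause condition to the two-term union via $\mJ \subseteq \mK$. Some care is also needed to justify the identification \emph{unbounded Fatou component} $=$ \emph{complement of the filled-in Julia set}, which relies on $P$ having degree $\geq 2$ — valid here because $P_c(w) = w^2 + c$.
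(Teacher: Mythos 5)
Your proof is correct and is essentially the argument the paper intends: the paper derives Corollary \ref{filled} ``automatically'' from Theorem \ref{normal3}, and your write-up simply makes that derivation explicit by negating the equivalence, identifying non-normality of $\boldsymbol F_{ei}$ with membership in $\mJ_{c_1 \mp c_2 \bo}$ and the unbounded Fatou component with the complement of $\mK_{c_1 \mp c_2 \bo}$, and matching the resulting union with the decomposition of $\mJ_{2,c}$ in Corollary \ref{corJulia}. The bookkeeping (including the collapse of the three-clause condition via $\mJ \subseteq \mK$) is handled correctly, so there is nothing to add.
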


We conclude this section with this following remark from \cite{charak} concerning the case of non-degenerate polynomials of degree greater than one.

\begin{remark}
The Corollary \ref{filled} can be extended for any bicomplex polynomials of the form $P(w)=a_dw^d+a_{d-1}w^{d-1}+...+a_0$ where $a_d\notin
\mathcal{NC}$ and $d\geq 2$.
\end{remark}

\section{The Inverse Iteration Method}

To extend the inverse iteration method for bicomplex Julia sets, we need to find a result similar to the second part of Theorem \ref{TheoInvC} for $\mJ_{2,c}$. However, the structure of this set as expressed in Corollary \ref{corJulia} suggests it is impossible to do so. Indeed, the set of inverse iterates of $P_c$ cannot be dense in $\mJ_{2,c}$ due to the presence of $\mK_{c_1 - c_2 \bo}$ and $\mK_{c_1 + c_2 \bo}$ (where the complex inverse iterates are not usually dense). Nevertheless, it appears relevant to adapt the inverse iteration method for $\mJ_{c_1 - c_2 \bo} \times_e \mJ_{c_1 + c_2 \bo}$ since
this specific case coincide with this specific condition :
$$
\mbox{\large \{ }z_1+z_2\bt\in\mJ_{2,c}\mid \{[\mathcal{P}_1(w^2+c)]^{\circ n}(z_1-z_2\bo)\}\mbox{ and }
\{[\mathcal{P}_2(w^2+c)]^{\circ n}(z_1+z_2\bo)\}\mbox{ are not normal \large \}}.
$$
We examine the bicomplex fixed points of $P_c$ and relate them to $\mJ_{c_1 - c_2 \bo}$ and $\mJ_{c_1 + c_2 \bo}$ to achieve our goal.

\begin{lemma}
  Let $c = c_1 + c_2 \bt \in \BC$. Then $p = p_1 \eo + p_2 \et \in \BC$ is a fixed point of $P_c$ if and only if $p_1$ is a fixed point of $P_{c_1 - c_2 \bo}$ and $p_2$ is a fixed point of $P_{c_1 + c_2 \bo}$.
\end{lemma}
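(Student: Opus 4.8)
The plan is to reduce the single bicomplex fixed-point equation $P_c(p) = p$ to a pair of complex fixed-point equations by passing to the idempotent basis $\{\eo, \et\}$ and invoking the uniqueness of that representation.

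First I would rewrite every quantity in the basis $\{\eo, \et\}$. Since $p = p_1 \eo + p_2 \et$, the squaring rule \emph{(iii)} recalled in the preliminaries gives $p^2 = p_1^2 \eo + p_2^2 \et$, while $c = c_1 + c_2 \bt$ decomposes as $c = (c_1 - c_2 \bo)\eo + (c_1 + c_2 \bo)\et$. Adding these term-by-term yields
$$P_c(p) = p^2 + c = \big(p_1^2 + c_1 - c_2 \bo\big)\eo + \big(p_2^2 + c_1 + c_2 \bo\big)\et = P_{c_1 - c_2 \bo}(p_1)\,\eo + P_{c_1 + c_2 \bo}(p_2)\,\et.$$

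Next I would compare this with $p = p_1 \eo + p_2 \et$ using the uniqueness of the idempotent representation: two bicomplex numbers written in the basis $\{\eo, \et\}$ coincide precisely when their $\eo$- and $\et$-components agree in $\mC(\bo)$. Applying this to $P_c(p) = p$ splits the one bicomplex equation into the two independent complex equations $P_{c_1 - c_2 \bo}(p_1) = p_1$ and $P_{c_1 + c_2 \bo}(p_2) = p_2$, which are exactly the statements that $p_1$ is a fixed point of $P_{c_1 - c_2 \bo}$ and $p_2$ is a fixed point of $P_{c_1 + c_2 \bo}$. Since each step above is an equivalence, both directions of the biconditional are obtained simultaneously.

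There is no real obstacle here; the argument is a direct algebraic reduction. The only point requiring care is the justification of the component-wise comparison, which rests on the fact that $\eo$ and $\et$ are orthogonal idempotents with $\eo + \et = 1$ and $\eo\et = 0$, so that the map sending $z_1 + z_2 \bt$ to its pair of idempotent components is a ring isomorphism onto $\mC(\bo)\times\mC(\bo)$. This is precisely what underlies the term-by-term operations \emph{(i)}--\emph{(iii)}, and once it is in place the biconditional follows immediately because every manipulation used is reversible.
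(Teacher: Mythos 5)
Your proposal is correct and follows essentially the same route as the paper's proof: decompose $p$, $p^2$, and $c$ in the idempotent basis, and use the uniqueness of that representation to split $P_c(p)=p$ into the two complex fixed-point equations, with every step an equivalence. The extra remark justifying the component-wise comparison via the ring isomorphism onto $\mC(\bo)\times\mC(\bo)$ is a fine (if implicit in the paper) addition.
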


\begin{proof}
Let $c = c_1 + c_2 \bt \in \BC$ and $p = p_1 \eo + p_2 \et \in \BC$. Then $p$ is a fixed point of $P_c$ if and only if
\begin{eqnarray*}
  p^2 + c = p & \Leftrightarrow & (p_1 \eo + p_2 \et)^2 + (c_1 - c_2 \bo)\eo + (c_1 + c_2 \bo)\et = p_1 \eo + p_2 \et\\
   & \Leftrightarrow &
   \begin{cases}
       p_1 ^2 + (c_1 - c_2 \bo) = p_1 \\
       p_2 ^2 + (c_1 + c_2 \bo) = p_2
    \end{cases}\\
    & \Leftrightarrow &
    \begin{cases}
      P_{c_1 - c_2 \bo} (p_1) = p_1\\
      P_{c_1 + c_2 \bo} (p_2) = p_2.
    \end{cases}
\end{eqnarray*}
Hence, $p$ is a fixed point of $P_c$ if and only if $p_1$ is a fixed point of $P_{c_1 - c_2 \bo}$ and $p_2$ is a fixed point of $P_{c_1 + c_2 \bo}$.
\end{proof}$\Box$\\

\begin{lemma}
  Let $c = c_1 + c_2 \bt \in \BC$. $P_c$ has at least one fixed point in $\mJ_{c_1 - c_2 \bo} \times_e \mJ_{c_1 + c_2 \bo}$.
\end{lemma}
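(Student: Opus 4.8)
The plan is to reduce the bicomplex statement to two independent complex statements via the idempotent representation, and then invoke the standard facts about fixed points of $P_\gamma(z) = z^2 + \gamma$ in the complex plane that were recalled in Section \ref{Julia2D}. The preceding lemma already tells us that $p = p_1 \eo + p_2 \et$ is a fixed point of $P_c$ precisely when $p_1$ is a fixed point of $P_{c_1 - c_2 \bo}$ and $p_2$ is a fixed point of $P_{c_1 + c_2 \bo}$. Since, by the definition of the product $\times_e$, membership of $p$ in $\mJ_{c_1 - c_2 \bo} \times_e \mJ_{c_1 + c_2 \bo}$ is equivalent to $p_1 \in \mJ_{c_1 - c_2 \bo}$ together with $p_2 \in \mJ_{c_1 + c_2 \bo}$, it suffices to produce, for each of the two complex parameters $\gamma \in \{c_1 - c_2 \bo,\, c_1 + c_2 \bo\}$, a fixed point of $P_\gamma$ that lies in the complex Julia set $\mJ_\gamma$.

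First I would establish this complex claim by a case distinction on $\gamma$. If $\gamma \neq \frac{1}{4}$, then $P_\gamma$ has two distinct fixed points, at least one of which is repelling, as recalled in Section \ref{Julia2D}. A repelling fixed point is a repelling periodic point (of period one), and such points always belong to the Julia set, which by \cite{devKeen} is the closure of the set of repelling periodic points; hence this repelling fixed point lies in $\mJ_\gamma$. If instead $\gamma = \frac{1}{4}$, then $P_\gamma$ has the unique indifferent fixed point $\frac{1}{2}$, which is known from \cite{peitgen} to lie in $\mJ_{1/4}$. In either case $P_\gamma$ admits a fixed point inside $\mJ_\gamma$.

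Applying this complex claim separately to $\gamma = c_1 - c_2 \bo$ and to $\gamma = c_1 + c_2 \bo$, I would select a fixed point $p_1 \in \mJ_{c_1 - c_2 \bo}$ of $P_{c_1 - c_2 \bo}$ and a fixed point $p_2 \in \mJ_{c_1 + c_2 \bo}$ of $P_{c_1 + c_2 \bo}$. Setting $p = p_1 \eo + p_2 \et$, the preceding lemma guarantees that $p$ is a fixed point of $P_c$, while the construction places $p$ in $\mJ_{c_1 - c_2 \bo} \times_e \mJ_{c_1 + c_2 \bo}$, which completes the argument.

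The argument is essentially a bookkeeping exercise once the correct complex dynamics facts are in hand, so I do not expect a genuine obstacle. The only point demanding care is the indifferent case $\gamma = \frac{1}{4}$: there the unique fixed point is not repelling, so the generic reasoning that repelling periodic points lie in the Julia set does not apply, and one must instead fall back on the specific fact that $\frac{1}{2} \in \mJ_{1/4}$. One should also keep in mind that the two projected parameters may fall into different cases (one equal to $\frac{1}{4}$ and the other not), but since the complex claim is proved uniformly for every $\gamma$, the two idempotent components can be handled independently without difficulty.
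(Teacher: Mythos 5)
Your proposal is correct and follows essentially the same route as the paper: a case distinction on whether each projected parameter equals $\frac{1}{4}$ (repelling fixed point in $\mJ_\gamma$ via \cite{devKeen}, or the indifferent fixed point $\frac{1}{2} \in \mJ_{1/4}$ via \cite{peitgen}), combined through the preceding lemma and the idempotent representation. Your write-up is in fact slightly more explicit than the paper's about why the repelling fixed point lies in the Julia set, but the argument is the same.
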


\begin{proof}
Let $c = c_1 + c_2 \bt \in \BC$ and consider the fixed points associated to $P_{c_1 - c_2 \bo}$ and $P_{c_1 + c_2 \bo}$. If $c_1 - c_2 \bo \neq \frac{1}{4}$, then $P_{c_1 - c_2 \bo}$ has at least one repelling fixed point in $\mJ_{c_1 - c_2 \bo}$. If $c_1 - c_2 \bo = \frac{1}{4}$, then there is a unique indifferent fixed point and it also belongs to $\mJ_{c_1 - c_2 \bo}$. Since the same reasoning applies for $c_1 + c_2 \bo$, then $P_{c_1 - c_2 \bo}$ has at least one fixed point $p_1 \in \mJ_{c_1 - c_2 \bo}$ and $P_{c_1 + c_2 \bo}$ has at least one fixed point $p_2 \in \mJ_{c_1 + c_2 \bo}$. Thus, there exists a fixed point $p = p_1 \eo + p_2 \et$ of $P_c$ such that $p \in \mJ_{c_1 - c_2 \bo} \times_e \mJ_{c_1 + c_2 \bo}$.
\end{proof}$\Box$\\

\noindent Now, let us state and demonstrate the main result of this article.

\begin{theorem}
  Let $c = c_1 + c_2 \bt \in \BC$ and $w_1$ a fixed point of $P_c$ such that $w_1 \in \mJ_{c_1 - c_2 \bo} \times_e \mJ_{c_1 + c_2 \bo}$. The set $\oa \bigcup\limits_{k=1}^{\infty} P_c ^{-k}(w_1)\fa$ is dense in $\mJ_{c_1 - c_2 \bo} \times_e \mJ_{c_1 + c_2 \bo}$.
\end{theorem}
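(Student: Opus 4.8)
The plan is to reduce the bicomplex assertion to the known complex density statement of Theorem~\ref{TheoInvC}(ii) applied separately in each idempotent component, and then to glue the two one‑variable approximations together. The difficulty I must confront from the outset is that the bicomplex inverse iterates at level $k$ form the \emph{index‑synchronized} cartesian set $P_c^{-k}(w_1) = P_{c_1 - c_2 \bo}^{-k}(p_1) \times_e P_{c_1 + c_2 \bo}^{-k}(p_2)$, where $w_1 = p_1 \eo + p_2 \et$. Consequently I cannot simply take the cartesian product of the two dense complex sets: a target point must be approximated in both components using \emph{one and the same} exponent $k$, whereas the complex density theorem only supplies an exponent for each component independently.

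First I would fix notation. Since $w_1$ is a fixed point of $P_c$ lying in $\mJ_{c_1 - c_2 \bo} \times_e \mJ_{c_1 + c_2 \bo}$, the preceding fixed‑point lemma gives $w_1 = p_1 \eo + p_2 \et$ where $p_1$ is a fixed point of $P_{c_1 - c_2 \bo}$ with $p_1 \in \mJ_{c_1 - c_2 \bo}$, and $p_2$ is a fixed point of $P_{c_1 + c_2 \bo}$ with $p_2 \in \mJ_{c_1 + c_2 \bo}$. The key observation I would then isolate, and the one that resolves the synchronization problem, is that because $p_1$ is a \emph{fixed} point the complex preimage sets are nested increasingly: $P_{c_1 - c_2 \bo}^{-k}(p_1) \subseteq P_{c_1 - c_2 \bo}^{-(k+1)}(p_1)$ for every $k$. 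Indeed, if $P_{c_1 - c_2 \bo}^{k}(u) = p_1$, then applying $P_{c_1 - c_2 \bo}$ once more and using $P_{c_1 - c_2 \bo}(p_1) = p_1$ yields $P_{c_1 - c_2 \bo}^{k+1}(u) = p_1$. The identical argument applies to $p_2$, so whenever $k \leq K$ one has $P_{c_1 - c_2 \bo}^{-k}(p_1) \subseteq P_{c_1 - c_2 \bo}^{-K}(p_1)$ and likewise in the second component.

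With the nesting established, the density argument becomes routine. Fixing a target $y = y_1 \eo + y_2 \et \in \mJ_{c_1 - c_2 \bo} \times_e \mJ_{c_1 + c_2 \bo}$ and $\varepsilon > 0$, I would apply Theorem~\ref{TheoInvC}(ii) to $P_{c_1 - c_2 \bo}$ with the starter $p_1$ to obtain an index $k_1$ and a point $x_1 \in P_{c_1 - c_2 \bo}^{-k_1}(p_1)$ with $|x_1 - y_1| < \varepsilon$, and similarly to $P_{c_1 + c_2 \bo}$ with starter $p_2$ to obtain $k_2$ and $x_2 \in P_{c_1 + c_2 \bo}^{-k_2}(p_2)$ with $|x_2 - y_2| < \varepsilon$. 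Setting $K = \max\{k_1,k_2\}$, the nesting promotes both preimages to level $K$, so $x := x_1 \eo + x_2 \et \in P_c^{-K}(w_1)$. By Theorem~\ref{TheoNormBC},
\[
||x - y|| = \sqrt{\frac{|x_1 - y_1|^2 + |x_2 - y_2|^2}{2}} < \sqrt{\frac{\varepsilon^2 + \varepsilon^2}{2}} = \varepsilon,
\]
so $x$ is a point of $\bigcup_{k=1}^{\infty} P_c^{-k}(w_1)$ within $\varepsilon$ of $y$. Since complete invariance of the complex Julia sets keeps every $P_{c_1 - c_2 \bo}^{-k}(p_1) \subseteq \mJ_{c_1 - c_2 \bo}$ and $P_{c_1 + c_2 \bo}^{-k}(p_2) \subseteq \mJ_{c_1 + c_2 \bo}$, the approximating points themselves lie in the target set, giving density in the strict sense.

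I expect the only real obstacle to be the coupling of the two idempotent indices; once the fixed‑point nesting is recognised, the remainder is bookkeeping with Theorems~\ref{TheoInvC} and~\ref{TheoNormBC}. I would stress in the write‑up that the hypothesis that $w_1$ be a \emph{fixed} point (rather than merely periodic) is precisely what makes the chain $P_{c_1 - c_2 \bo}^{-k}(p_1) \subseteq P_{c_1 - c_2 \bo}^{-(k+1)}(p_1)$ valid for \emph{every} $k$, and is therefore essential to the synchronization step of the argument.
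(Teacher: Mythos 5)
Your proposal is correct and takes essentially the same approach as the paper: both reduce the statement to Theorem~\ref{TheoInvC}(ii) applied in each idempotent component and then use the fixed-point hypothesis to synchronize the two preimage levels, your nesting observation $P_{c_1 - c_2 \bo}^{-k}(p_1) \subseteq P_{c_1 - c_2 \bo}^{-(k+1)}(p_1)$ being precisely the promotion step the paper carries out in its case $k'' > k'$. The only cosmetic differences are that you take the maximum of two independently chosen indices and promote both components (the paper instead forces $k'' \geq k'$ by invoking the complex density result with $k_1 = k'$ and promotes only the first component), and that you verify closeness directly with the norm formula of Theorem~\ref{TheoNormBC} rather than through the discus inclusion of Theorem~\ref{TheoInclBC}.
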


\begin{proof}
Let $c = c_1 + c_2 \bt \in \BC$ and $w_1 = s_1 + s_2 \bt \in \BC$ such that $P_c (w_1) = w_1$ and $w_1 \in \mJ_{c_1 - c_2 \bo} \times_e \mJ_{c_1 + c_2 \bo}$. We need to show that for all $w \in \mJ_{c_1 - c_2 \bo} \times_e \mJ_{c_1 + c_2 \bo}$, any open set containing $w$ also contains an element of $\oa \bigcup\limits_{k=1}^{\infty} P_c ^{-k}(w_1)\fa$. Let $w = z_1 + z_2 \bt \in \mJ_{c_1 - c_2 \bo} \times_e \mJ_{c_1 + c_2 \bo}$ and $V \subseteq \BC$ an open set with $w \in V$. By hypothesis, we have $s_1 - s_2 \bo \in \mJ_{c_1 - c_2 \bo}$ and $s_1 + s_2 \bo \in \mJ_{c_1 + c_2 \bo}$. From Theorem \ref{TheoInvC}, this means that
\begin{description}
  \item \emph{(a)} $\oa \bigcup\limits_{k=k_1}^{\infty} P_{c_1 - c_2 \bo} ^{-k}(s_1 - s_2 \bo)\fa$ is dense in $\mJ_{c_1 - c_2 \bo}$ for all $k_1 \geq 1$
  \item \emph{(b)} $\oa \bigcup\limits_{k=k_1}^{\infty} P_{c_1 + c_2 \bo} ^{-k}(s_1 + s_2 \bo)\fa$ is dense in $\mJ_{c_1 + c_2 \bo}$ for all $k_1 \geq 1$.
\end{description}
Since $V$ is open, there exists $R>0$ such that $B^2 (w,R) \subseteq V$. From Theorem \ref{TheoInclBC}, $D(w;R,R) = B^1 (z_1 - z_2 \bo, R) \times_e B^1 (z_1 + z_2 \bo, R) \varsubsetneq B^2 (w,R)$. Considering that $B^1 (z_1 - z_2 \bo,R)$ is open and centered in $\mJ_{c_1 - c_2 \bo}$, then it must contain an element of $\oa \bigcup\limits_{k=1}^{\infty} P_{c_1 - c_2 \bo} ^{-k}(s_1 - s_2 \bo)\fa$ from \emph{(a)} with $k_1 = 1$. Hence, there exists $v_1 \in B^1 (z_1 - z_2 \bo,R)$ such that $v_1 \in P_{c_1 - c_2 \bo} ^{-k'} (s_1 - s_2 \bo)$ for some $k' \geq 1$. Similarly, $B^1 (z_1 + z_2 \bo, R)$ contains an element of $\oa \bigcup\limits_{k=k'}^{\infty} P_{c_1 + c_2 \bo} ^{-k}(s_1 + s_2 \bo)\fa$ from \emph{(b)} with $k_1 = k'$. So, there exists $v_2 \in B^1 (z_1 + z_2 \bo, R)$ with $v_2 \in P_{c_1 + c_2 \bo} ^{-k''} (s_1 + s_2 \bo)$ for some $k'' \geq k' \geq 1$. There are two possible scenarios :
\begin{description}
  \item \emph{(i)} Suppose $k'' = k'$. Then there exists $v = v_1 \eo + v_2 \et$ such that $v \in V$ and $v \in P_{c_1 - c_2 \bo} ^{-k'} (s_1 - s_2 \bo) \times_e P_{c_1 + c_2 \bo} ^{-k'} (s_1 + s_2 \bo) = P_c ^{-k'}(w_1)$ for some $k' \geq 1$.
  \item \emph{(ii)} Suppose $k'' > k'$. Since $w_1$ is a fixed point of $P_c$, then $s_1 - s_2 \bo$ is a fixed point of $P_{c_1 - c_2 \bo}$. Using induction, we easily show that $s_1 - s_2 \bo = P_{c_1 - c_2 \bo} ^n (s_1 - s_2 \bo)$ for all $n \in \{1,2, \dots\}$. Since, $v_1 \in P_{c_1 - c_2 \bo} ^{-k'} (s_1 - s_2 \bo)$, we may write $P_{c_1 - c_2 \bo} ^{k'} (v_1) = s_1 - s_2 \bo$. Let $m = k'' - k' \geq 1$. Then
      $$P_{c_1 - c_2 \bo} ^{k''}(v_1) = P_{c_1 - c_2 \bo} ^{m + k'} (v_1) =P_{c_1 - c_2 \bo} ^{m} \(P_{c_1 - c_2 \bo} ^ {k'} (v_1)\) = P_{c_1 - c_2 \bo}^{m} (s_1 - s_2 \bo) = s_1 - s_2 \bo$$
      and $v_1 \in P_{c_1 - c_2 \bo} ^{-k''}(s_1 - s_2 \bo)$. Therefore there exists $v = v_1 \eo + v_2 \et \in V$ such that $v \in P_c ^{-k''} (w_1)$ for some $k'' > 1$.
\end{description}
In both cases, there exists $v \in V$ such that $v \in \oa \bigcup\limits_{k=1}^{\infty} P_c ^{-k}(w_1)\fa$. Thus if $w_1$ is a fixed point of $P_c$ in $\mJ_{c_1 - c_2 \bo} \times_e \mJ_{c_1 + c_2 \bo}$, then $\oa \bigcup\limits_{k=1}^{\infty} P_c ^{-k}(w_1)\fa$ is dense in $\mJ_{c_1 - c_2 \bo} \times_e \mJ_{c_1 + c_2 \bo}$.
\end{proof}$\Box$\\

\begin{figure}[!h]
  \centering
  \includegraphics[width=8cm]{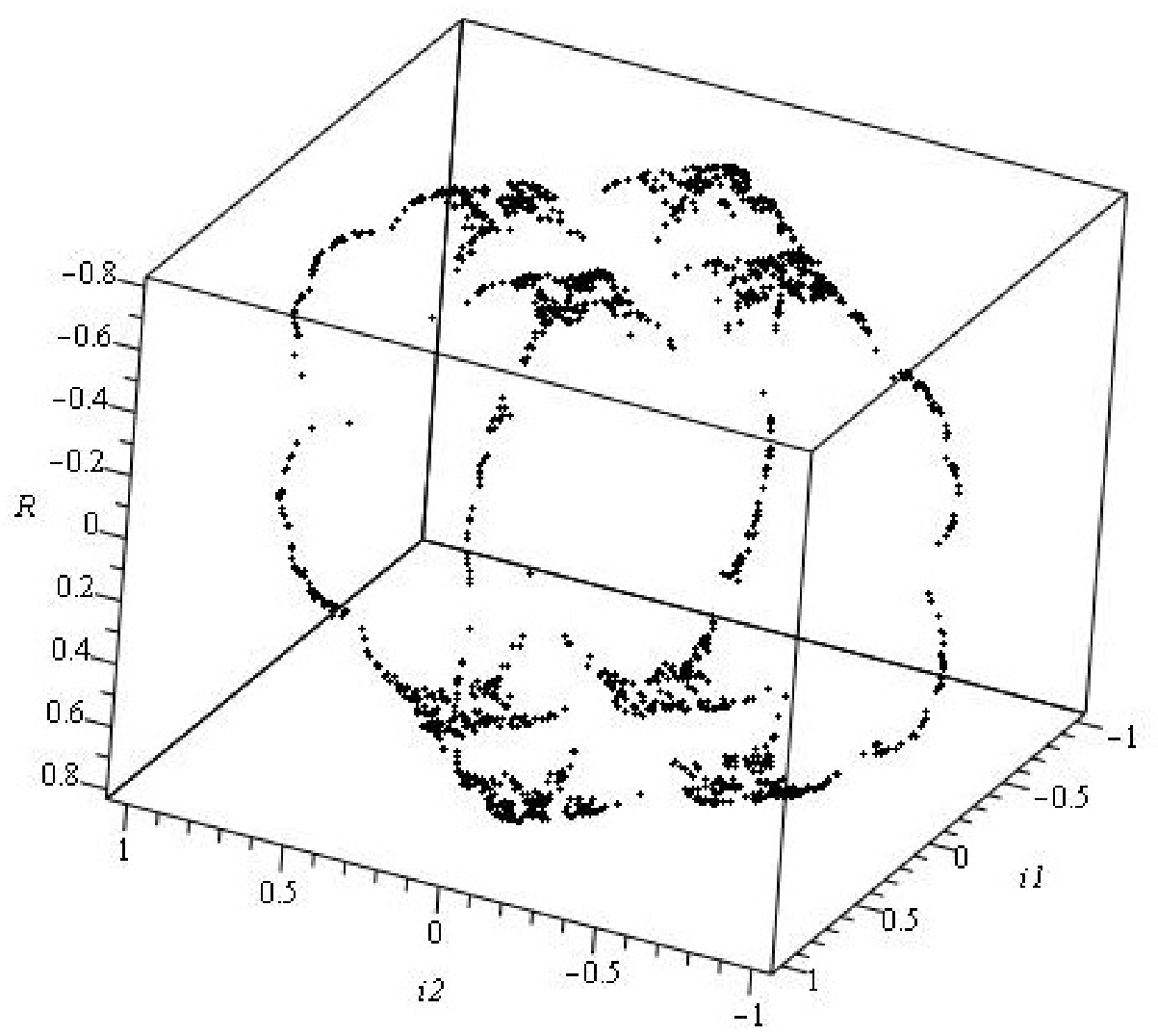}
  \caption{$\mJ_{c_1 - c_2 \bo} \times_e \mJ_{c_1 + c_2 \bo}$ for $c=0,25$ in 3D space}
  \label{ImJulia3dInverse1}
\end{figure}

Consequently, part of a bicomplex Julia set may be created using an adapted version of the inverse iteration method. As an example, for $c = 0,25$ the set $\mJ_{c_1 - c_2 \bo} \times_e \mJ_{c_1 + c_2 \bo}$ is generated and given in Figure \ref{ImJulia3dInverse1}. Since $c_1 - c_2 \bo = c_1 + c_2 \bo = 0,25$ and $\frac{1}{2}$ is the only fixed point of $P_{c_1-c_2 \bo} = P_{c_1+c_2 \bo}$, $w_1 = \frac{1}{2} \eo + \frac{1}{2} \et$ is chosen to start the algorithm. At each iteration, the inverse is computed choosing randomly one of the branches of the bicomplex square root function $\sqrt{w-c}$. A 3D cut is applied as before, by keeping only the points for which the absolute value of the $\bj$-component is less than a certain value $\epsilon > 0$.\\

In the complex plane, a filled-in Julia set that is a dendrite has the property of being equal to its boundary : $\mK_c = \mJ_c$. A \textbf{bicomplex dendrite} may be defined as a bicomplex cartesian set for which both components are dendrites in $\mC(\bo)$. Hence, $\mK_{2,c}$ is a dendrite when $\mK_{c_1 - c_2 \bo}$ and $\mK_{c_1 + c_2 \bo}$ are complex dendrites. In this case, $\mK_{2,c} = \mJ_{c_1 - c_2 \bo} \times_e \mJ_{c_1 + c_2 \bo} = \mJ_{2,c}$ and the last theorem directly applies to observe the complete bicomplex Julia set and not only part of it.

\begin{figure}[!h]
  \centering
  \includegraphics[width=8cm]{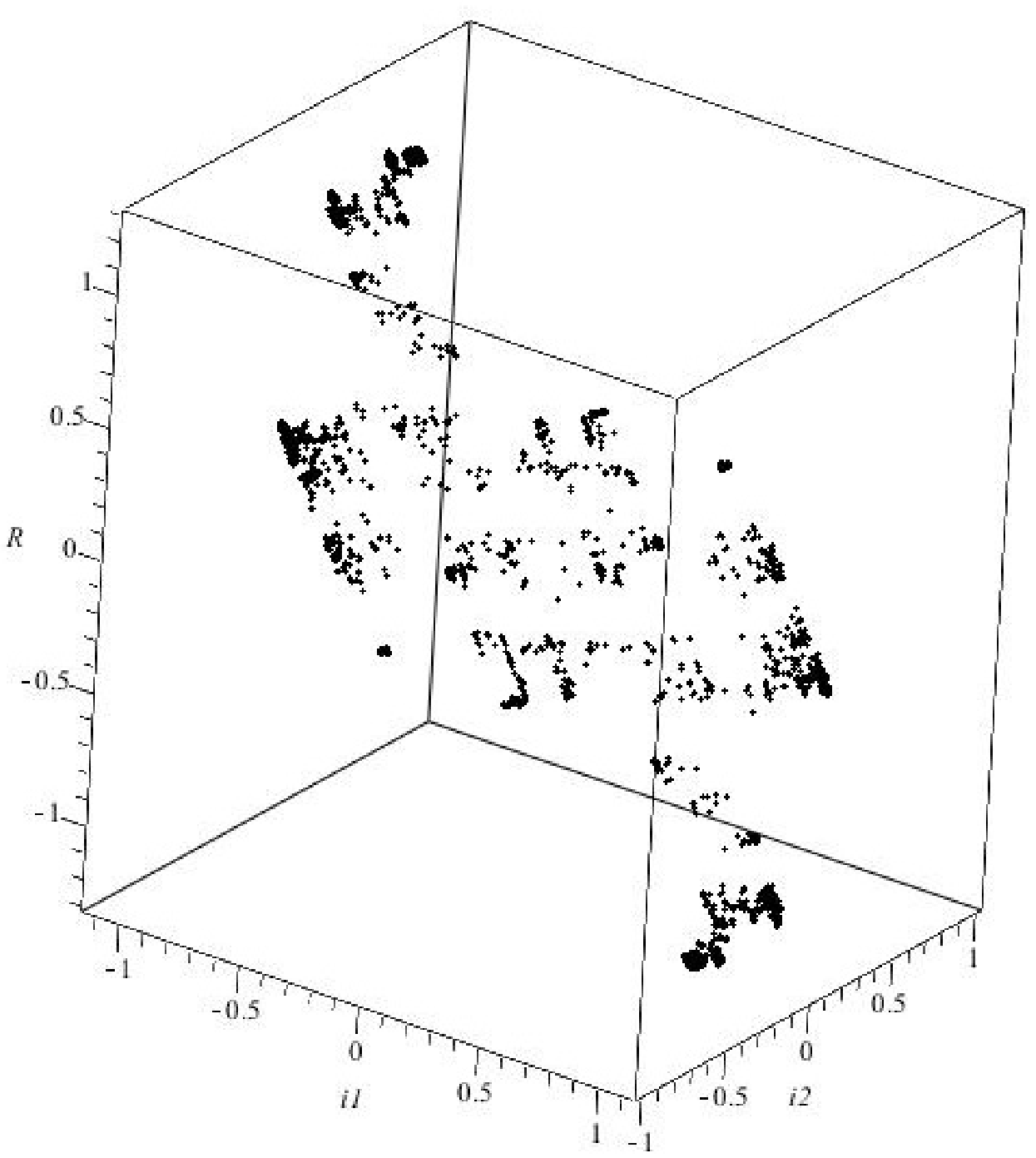}
  \caption{$\mJ_{2,c}$ for $c= \bo$ in 3D space}
  \label{ImJulia3dInverse2}
\end{figure}

\begin{corollary}
  Let $c = c_1 + c_2 \bt \in \BC$ and $\mK_{2,c}$ a bicomplex dendrite. Let $w_1$ a fixed point of $P_c$ such that $w_1 \in \mJ_{2,c}$. The set $\oa \bigcup\limits_{k=1}^{\infty} P_c ^{-k}(w_1)\fa$ is dense in $\mJ_{2,c}$.
\end{corollary}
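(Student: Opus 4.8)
The plan is to reduce this corollary directly to the previous Theorem by showing that, under the dendrite hypothesis, the bicomplex Julia set $\mJ_{2,c}$ collapses to the single bicomplex cartesian set $\mJ_{c_1 - c_2 \bo} \times_e \mJ_{c_1 + c_2 \bo}$.

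First I would unwind the definition of a bicomplex dendrite. Since $\mK_{2,c}$ is a bicomplex dendrite, both idempotent components $\mK_{c_1 - c_2 \bo}$ and $\mK_{c_1 + c_2 \bo}$ are complex dendrites in $\mC(\bo)$. A complex dendrite equals its own boundary, i.e.\ $\mK = \pa\mK = \mJ$ for each component, so that $\mK_{c_1 - c_2 \bo} = \mJ_{c_1 - c_2 \bo}$ and $\mK_{c_1 + c_2 \bo} = \mJ_{c_1 + c_2 \bo}$. Substituting these two equalities into the expression for $\mJ_{2,c}$ given by Corollary \ref{corJulia}, both terms of the union become the same set $\mJ_{c_1 - c_2 \bo} \times_e \mJ_{c_1 + c_2 \bo}$, whence $\mJ_{2,c} = \mJ_{c_1 - c_2 \bo} \times_e \mJ_{c_1 + c_2 \bo}$.

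With this identity established, the hypothesis that $w_1$ is a fixed point of $P_c$ lying in $\mJ_{2,c}$ is literally the hypothesis that $w_1$ is a fixed point of $P_c$ lying in $\mJ_{c_1 - c_2 \bo} \times_e \mJ_{c_1 + c_2 \bo}$, and density in $\mJ_{2,c}$ is density in the same product set. The previous Theorem therefore applies without modification and delivers the conclusion that $\oa \bigcup\limits_{k=1}^{\infty} P_c ^{-k}(w_1)\fa$ is dense in $\mJ_{2,c}$. Because every step is a routine substitution, there is no genuine difficulty here; the only point that deserves explicit attention is that the defining property $\mK = \pa\mK$ of a dendrite is exactly what forces the two cartesian sets in Corollary \ref{corJulia} to coincide and the union to reduce to a single product.
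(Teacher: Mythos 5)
Your proposal is correct and matches the paper's own reasoning: the paper likewise uses the dendrite property $\mK = \pa\mK$ componentwise to collapse the characterization of Corollary \ref{corJulia} to $\mJ_{2,c} = \mJ_{c_1 - c_2 \bo} \times_e \mJ_{c_1 + c_2 \bo}$, and then invokes the preceding theorem directly. There is nothing to add.
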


Consider $c = \bo$, that is $c = c_1 + c_2 \bt \in \BC$ with $c_1 = \bo$ and $c_2 = 0$. Then $c_1 - c_2 \bo = c_1 + c_2 \bo = \bo$ and $\mK_{2,c} = \mJ_{2,c} = \mJ_{\bo} \times \mJ_{\bo}$ is a dendrite. Again, the inverse iteration method is adaptable to generate this set and display it in the 3D space. To do so, let $z_1 \in \mJ_{\bo}$ such that $z_1 = P_{\bo} (z_1)$. Then $w_0 = z_1 \eo + z_1 \et$ is a fixed point of $P_c$ in $\mJ_{2,c}$ and so, a good starter for the algorithm. The same reasoning as before is used to produce Figure \ref{ImJulia3dInverse2}.


\section*{Acknowledgments}
DR is grateful to the Natural Sciences and
Engineering Research Council of Canada for financial
support.  CM would like to thank the Qu\'{e}bec
FRQNT Fund for the award of a postgraduate
scholarship.


\end{document}